\newcolumntype{P}[1]{>{\centering\arraybackslash}p{#1}}
\newcommand{\assign}{:=}
\newcommand{\nobracket}{}
\newcommand{\nosymbol}{}
\newcommand{\bs}[1]{\ensuremath{\boldsymbol{#1}}}
\newcommand{\tmop}[1]{\ensuremath{\operatorname{#1}}}
\renewenvironment{proof}{\noindent\textbf{Proof\ }}{\hspace*{\fill}$\Box$\medskip}
\newcommand{\ifcomment}{\iffalse}
\newcommand{\ssf}[1]{{\mbox{\sffamily #1}}}
\newcommand{\ssb}[1]{{\mbox{\sffamily \bfseries #1}}}
\newcommand{\avg  }[1]{\langle \, #1 \, \rangle}
\newcommand{\ti}[1]{\tilde{#1}}
\newcommand{\G}{\Gamma}
\newcommand{\Om}{\Omega}
\newcommand{\om}{\omega}
\newcommand{\cT}{{\mathcal T}}
\newcommand{\tO}{\tilde{\Omega}_h}
\newcommand{\tG}{\tilde{\Gamma}_h}
\newcommand{\tGD}{\tilde{\Gamma}_{D,h}}
\newcommand{\tGN}{\tilde{\Gamma}_{N,h}}
\newcommand{\tn}{{\tilde{\bs{n}}}}
\newcommand{\tx}{{\tilde{\bs{x}}}}
\newcommand{\tS}{\ssf{S}_{\bs{\delta}}}
\newtheorem{thm}{Theorem}
\newtheorem{lemma}{Lemma}
\newdefinition{rem}{Remark}
\newtheorem{assumption}{Assumption}
\newcolumntype{M}[1]{>{\centering\arraybackslash}m{#1}}
\newcommand{\logLogSlopeTriangle}[6]
{
	% #1. Relative offset in x direction.
	% #2. Width in x direction, so xA-xB.
	% #3. Relative offset in y direction.
	% #4. Slope d(y)/d(log10(x)).
	% #5. Plot options.
	
	\pgfplotsextra
	{
		\pgfkeysgetvalue{/pgfplots/xmin}{\xmin}
		\pgfkeysgetvalue{/pgfplots/xmax}{\xmax}
		\pgfkeysgetvalue{/pgfplots/ymin}{\ymin}
		\pgfkeysgetvalue{/pgfplots/ymax}{\ymax}
		
		% Calculate auxilliary quantities, in relative sense.
		\pgfmathsetmacro{\xArel}{#1}
		\pgfmathsetmacro{\yArel}{#3}
		\pgfmathsetmacro{\xBrel}{#1-#2}
		\pgfmathsetmacro{\yBrel}{\yArel}
		\pgfmathsetmacro{\xCrel}{\xArel}
		%\pgfmathsetmacro{\yCrel}{ln(\yC/exp(\ymin))/ln(exp(\ymax)/exp(\ymin))} % REPLACE THIS EXPRESSION WITH AN EXPRESSION INDEPENDENT OF \yC TO PREVENT THE 'DIMENSION TOO LARGE' ERROR.
		
		\pgfmathsetmacro{\lnxB}{\xmin*(1-(#1-#2))+\xmax*(#1-#2)} % in [xmin,xmax].
		\pgfmathsetmacro{\lnxA}{\xmin*(1-#1)+\xmax*#1} % in [xmin,xmax].
		\pgfmathsetmacro{\lnyA}{\ymin*(1-#3)+\ymax*#3} % in [ymin,ymax].
		\pgfmathsetmacro{\lnyC}{\lnyA+#4*(\lnxA-\lnxB)}
		\pgfmathsetmacro{\yCrel}{\lnyC-\ymin)/(\ymax-\ymin)} % THE IMPROVED EXPRESSION WITHOUT 'DIMENSION TOO LARGE' ERROR.
		
		% Define coordinates for \draw. MIND THE 'rel axis cs' as opposed to the 'axis cs'.
		\coordinate (A) at (rel axis cs:\xArel,\yArel);
		\coordinate (B) at (rel axis cs:\xBrel,\yBrel);
		\coordinate (C) at (rel axis cs:\xCrel,\yCrel);
		
		% Draw slope triangle.
		\draw[#5,line width = #6 mm]   (A)-- node[pos=0.5,anchor=north] {1}
		(B)-- 
		(C)-- node[pos=0.5,anchor=west] {#4}
		cycle;
	}
}
\newcommand{\ReverseLogLogSlopeTriangle}[6]
{
	% #1. Relative offset in x direction.
	% #2. Width in x direction, so xA-xB.
	% #3. Relative offset in y direction.
	% #4. Slope d(y)/d(log10(x)).
	% #5. Plot options.
	
	\pgfplotsextra
	{
		\pgfkeysgetvalue{/pgfplots/xmin}{\xmin}
		\pgfkeysgetvalue{/pgfplots/xmax}{\xmax}
		\pgfkeysgetvalue{/pgfplots/ymin}{\ymin}
		\pgfkeysgetvalue{/pgfplots/ymax}{\ymax}
		
		% Calculate auxilliary quantities, in relative sense.
		\pgfmathsetmacro{\xArel}{#1}
		\pgfmathsetmacro{\yArel}{#3}
		\pgfmathsetmacro{\xBrel}{#1-#2}
		\pgfmathsetmacro{\yBrel}{\yArel}
		\pgfmathsetmacro{\xCrel}{\xBrel}
		%\pgfmathsetmacro{\yCrel}{ln(\yC/exp(\ymin))/ln(exp(\ymax)/exp(\ymin))} % REPLACE THIS EXPRESSION WITH AN EXPRESSION INDEPENDENT OF \yC TO PREVENT THE 'DIMENSION TOO LARGE' ERROR.
		
		\pgfmathsetmacro{\lnxB}{\xmin*(1-(#1-#2))+\xmax*(#1-#2)} % in [xmin,xmax].
		\pgfmathsetmacro{\lnxA}{\xmin*(1-#1)+\xmax*#1} % in [xmin,xmax].
		\pgfmathsetmacro{\lnyA}{\ymin*(1-#3)+\ymax*#3} % in [ymin,ymax].
		\pgfmathsetmacro{\lnyC}{\lnyA-#4*(\lnxA-\lnxB)}
		\pgfmathsetmacro{\yCrel}{\lnyC-\ymin)/(\ymax-\ymin)} % THE IMPROVED EXPRESSION WITHOUT 'DIMENSION TOO LARGE' ERROR.
		
		% Define coordinates for \draw. MIND THE 'rel axis cs' as opposed to the 'axis cs'.
		\coordinate (A) at (rel axis cs:\xArel,\yArel);
		\coordinate (B) at (rel axis cs:\xBrel,\yBrel);
		\coordinate (C) at (rel axis cs:\xCrel,\yCrel);
		
		% Draw slope triangle.
		\draw[#5,line width = #6 mm]     (A)-- node[pos=0.5,anchor=south] {1}
		(B)-- node[pos=0.5, anchor = east] {#4}	
		(C)-- 
		cycle;
	}
}
\newcommand{\ReverseLogLogSlopeTriangleFlip}[6]
{
	% #1. Relative offset in x direction.
	% #2. Width in x direction, so xA-xB.
	% #3. Relative offset in y direction.
	% #4. Slope d(y)/d(log10(x)).
	% #5. Plot options.
	
	\pgfplotsextra
	{
		\pgfkeysgetvalue{/pgfplots/xmin}{\xmin}
		\pgfkeysgetvalue{/pgfplots/xmax}{\xmax}
		\pgfkeysgetvalue{/pgfplots/ymin}{\ymin}
		\pgfkeysgetvalue{/pgfplots/ymax}{\ymax}
		
		% Calculate auxilliary quantities, in relative sense.
		\pgfmathsetmacro{\xArel}{#1}
		\pgfmathsetmacro{\yArel}{#3}
		\pgfmathsetmacro{\xBrel}{#1-#2}
		\pgfmathsetmacro{\yBrel}{\yArel}
		\pgfmathsetmacro{\xCrel}{\xBrel}
		%\pgfmathsetmacro{\yCrel}{ln(\yC/exp(\ymin))/ln(exp(\ymax)/exp(\ymin))} % REPLACE THIS EXPRESSION WITH AN EXPRESSION INDEPENDENT OF \yC TO PREVENT THE 'DIMENSION TOO LARGE' ERROR.
		
		\pgfmathsetmacro{\lnxB}{\xmin*(1-(#1-#2))+\xmax*(#1-#2)} % in [xmin,xmax].
		\pgfmathsetmacro{\lnxA}{\xmin*(1-#1)+\xmax*#1} % in [xmin,xmax].
		\pgfmathsetmacro{\lnyA}{\ymin*(1-#3)+\ymax*#3} % in [ymin,ymax].
		\pgfmathsetmacro{\lnyC}{\lnyA-#4*(\lnxA-\lnxB)}
		\pgfmathsetmacro{\yCrel}{\lnyC-\ymin)/(\ymax-\ymin)} % THE IMPROVED EXPRESSION WITHOUT 'DIMENSION TOO LARGE' ERROR.
		
		% Define coordinates for \draw. MIND THE 'rel axis cs' as opposed to the 'axis cs'.
		\coordinate (A) at (rel axis cs:\xArel,\yArel);
		\coordinate (B) at (rel axis cs:\xBrel,\yBrel);
		\coordinate (C) at (rel axis cs:\xCrel,\yCrel);
		
		% Draw slope triangle.
		\draw[#5,line width = #6 mm]     (A)-- node[pos=0.5,anchor=north] {1}
		(B)-- node[pos=0.5, anchor = east] {#4}	
		(C)-- 
		cycle;
	}
}
\journal{Computer Methods in Applied Mechanics and Engineering}
\begin{document}
	
\begin{frontmatter}

	\title{A penalty-free Shifted Boundary Method of arbitrary order}

	\author[duke]{J. Haydel Collins}
	\ead{haydel.collins@duke.edu}
	\author[franchecomte]{Alexei Lozinski\corref{ca}}
	\ead{alexei.lozinski@univ-fcomte.fr}
	\author[duke]{Guglielmo Scovazzi\corref{ca}}
	\ead{guglielmo.scovazzi@duke.edu}
	\address[duke]{Department of Civil and Environmental Engineering, Duke University, Durham, North Carolina 27708, USA}
	\address[franchecomte]{Universit\'{e} de Franche-Comt\'{e}, CNRS, LmB, F-25000 Besan\c{c}on, France}
	\cortext[ca]{Alexei Lozinski, Guglielmo Scovazzi}

	\begin{abstract} % abstract
	We introduce and analyze a penalty-free formulation of the Shifted Boundary Method (SBM), inspired by the asymmetric version of the Nitsche method.
	We prove its stability and convergence for arbitrary order finite element interpolation spaces and we test its performance with a number of numerical experiments. 
	Moreover, while the SBM was previously believed to be only asymptotically consistent (in the sense of Galerkin orthogonality), we prove here that it is indeed exactly consistent.
	{\color{olive} This contribution is dedicated to Thomas J.R. Hughes, in honor of his lifetime achievements.}
	\end{abstract}

	\begin{keyword}
	Shifted Boundary Method; Immersed Boundary Method; small cut-cell problem; approximate domain boundaries; weak boundary conditions; unfitted finite element methods.
	\end{keyword}
	\end{frontmatter}

\section{Introduction \label{sec:intro} }

Recently, immersed/embedded/unfitted computational methods have received great attention for problems involving complex geometrical features, described in standard formats (i.e., CAD) and non-standard formats (i.e., STL, level sets, etc.). 
In fact, immersed/embedded/unfitted have the potential to drastically reduce the pre-processing time involved in the acquisition of the geometry and the generation of the computational grid.
An (incomplete) list of recent developments include the Immersed Boundary Finite Element Method (IB-FEM)~\cite{boffi2003finite}, the cutFEM~\cite{burman2010ghost,hansbo2002unfitted,schott2015face,burman2018cut,burman2019dirichlet,burman2017cut,burman2010fictitious,burman2012fictitious,burman2014unfitted,burman2018shape,massing2015nitsche,burman2015cutfem,kamensky2017immersogeometric,xu2016tetrahedral,badia2018aggregated}, the Finite Cell Method~\cite{parvizian2007finite,duster2008finite}, and similar earlier methods~\cite{hollig2003finite,hollig2001weighted,ruberg2012subdivision,ruberg2014fixed,lozinski2016new}.

Most of these approaches require the geometric construction of the partial elements cut by the embedded boundary, which can be both algorithmically complicated and computationally intensive, due to data structures that are considerably more complex with respect to corresponding fitted finite element methods. 
Furthermore, integrating the variational forms on the characteristically irregular cut cells may also be difficult and advanced quadrature formulas might need to be employed~\cite{parvizian2007finite,duster2008finite}. %Accordingly, it is typical for unfitted methods that a non-negligible portion of the overall wall-clock time for a simulation is spent handling the embedded boundary, when complex geometries are considered.
 
The Shifted Boundary Method (SBM) was proposed  in~\cite{main2018shifted0} as an alternative to existing embedded/unfitted boundary methods and belongs to the more specific class of approximate domain methods~\cite{bramble1972projection,bramble1996finite,bramble1994robust,cockburn2012solving,cockburn2014priori,cockburn2014solving,bertoluzza2005fat,bertoluzza2011analysis,glowinski1994fictitious,lozinski2016new}.
The $\phi$-FEM~\cite{duprez2020phi,duprez2022immersed,cotin2022varphi,duprez2023phi,duprez2023phi2,duprez2023new} also belongs to this class, although with some key differences.
The SBM was proposed in~\cite{main2018shifted0} for the Poisson and Stokes flow problems and generalized in~\cite{main2018shifted} to the advection-diffusion and Navier-Stokes equations, and in~\cite{song2018shifted} to hyperbolic conservation laws.
An analysis of the stability and accuracy of the SBM for the Poisson, advection-diffusion, and Stokes operators was also included in~\cite{main2018shifted0,main2018shifted,atallah2020analysis}, respectively.  A high-order version of the SBM was proposed in~\cite{atallah2022high}, applications to solid and fracture mechanics problems were presented in~\cite{liu2020shift,atallah2021shifted,li2021shifted,li2023blended,li2021shiftedsimple} and simulations of static and moving interfaces were developed  in~\cite{li2020shifted,colomes2021weighted}.

The SBM is built for minimal computational complexity and does not contain any cut cell by design.
Specifically, the location where boundary conditions are applied is {\it shifted} from the true to an approximate (surrogate) boundary, and, at the same time, modified ({\it shifted}) boundary conditions are applied in order to avoid a reduction in the convergence rates of the overall formulation.
In fact, if the boundary conditions associated to the true domain are not appropriately modified on the surrogate domain, only first-order convergence is to be expected. The shifted boundary conditions are imposed by means of Taylor expansions and are applied weakly, using Nitsche's method, leading to a simple, robust, accurate and efficient algorithm. 

In the original and subsequent works on the SBM, boundary conditions were enforced by means of a Nitsche-like approach, inspired by the symmetric penalty Galerkin formulation. 
This strategy requires the selection of a penalty parameter, which might be tedious to estimate in practical engineering computations. 
In the context of immersed methods, there has been recent interest in developing penalty-free methods~\cite{oyarzua2019high}, and we propose here an alternative SBM inspired by the work of Burman~\cite{burman12}.
We prove stability, consistency and convergence of this new, parameter-free, SBM formulation in the context of the Poisson equation, and we test it in a number of preliminary numerical experiments. 
We also extend these ideas to the case of compressible linear elasticity.

In the derivation of the mathematical proofs, we also introduce an interpretation of the SBM that allow us to prove its {\it exact} Galerkin consistency, while before the method was only thought to be asymptotically consistent.

This article is organized as follows: Section~\ref{sec:sbm_intro} introduces the general SBM notation; the analysis of the penalty-free SBM variational formulation of the Poisson problem and its numerical analysis of stability and convergence is discussed in Section~\ref{sec:sbm_poisson}; the extension of the method to the equations of compressible linear elasticity is discussed in Section~\ref{sec:sbm_elasticity}; a series of numerical tests is presented in Section~\ref{sec:numerical_results}; and, finally, conclusions are summarized in Section~\ref{sec:summary}.

\section{Preliminaries} \label{sec:sbm_intro}
%%%

\subsection{The true domain, the surrogate domain and maps}
\label{sec:sbmDef}

Let $\Om$ be a connected open set in $\mathbb{R}^{d}$ with Lipschitz boundary $\G = \partial \Om$, and let $\bs{n}$ denote the outer-pointing normal to $\G$.
We consider a closed domain ${\cal D}$ such that $\text{clos}(\Om) \subseteq {\cal D}$, and we introduce a family $\cT_h$ of admissible, shape-regular, and quasi-uniform triangulations of ${\cal D}$.
We will indicate by $h_T$ the size of element $T \in \cT_h$ and by $h$ the piecewise constant function such that $h_{|T}=h_T$.

\begin{rem}
The assumption of quasi-uniformity is not essential for the numerical analysis of the proposed methods, but it greatly simplifies the notation.
\end{rem}
We restrict each triangulation by selecting those elements that are contained in $\text{clos}(\Om)$, i.e., we form
$$
\ti{\cT}_h := \{ T \in \cT_h : T \subset \text{clos}(\Om) \} \, ,
$$ 
which identifies the {\sl surrogate domain}
$$
\tO := \text{int} \left(\bigcup_{T \in \ti{\cT}_h}  T \right) \subseteq \Om \,,
$$
with {\sl surrogate boundary} $\tG:=\partial \tO$ and outward-oriented unit normal vector $\ti{\bs{n}}$ to $\tG$. 
Obviously, $\ti{\cT}_h$ is an admissible, shape-regular triangulation of $\tO$ (see Figure~\ref{fig:SBM}).
\begin{figure}
	\centering
	\begin{subfigure}[b]{.4\textwidth}\centering
		\begin{tikzpicture}[scale=0.85]
		%%% fill 
		\draw [black, draw=none,name path=surr] plot coordinates { (-2,-3.4641) (-1,-1.73205) (0,-0.5) (1,1.73205) (2.6,2.2) (5,1.73205) (7,2.1) (7.62,0.8) };
		\draw [blue, name path=true] plot[smooth] coordinates {(-0.7,-3.4641) (1.75,0.75) (8.25,0.75)};
		\tikzfillbetween[of=true and surr,split]{gray!15!};
		%%% first line of elements 
		\draw[line width = 0.25mm,densely dashed,gray] (-1,1.73205) -- (0,3.4641);
		\draw[line width = 0.25mm,densely dashed,gray] (0,3.4641) -- (2,0);
		\draw[line width = 0.25mm,densely dashed,gray] (1,1.73205) -- (1,3.4641);
		\draw[line width = 0.25mm,densely dashed,gray] (1,3.4641) -- (0,3.4641);
		\draw[line width = 0.25mm,densely dashed,gray] (1,3.4641) -- (2.6,2.2);
		\draw[line width = 0.25mm,densely dashed,gray] (1,3.4641) -- (3.25,3.4641);
		\draw[line width = 0.25mm,densely dashed,gray] (3.25,3.4641) -- (2.6,2.2);
		\draw[line width = 0.25mm,densely dashed,gray] (3.25,3.4641) -- (5,1.73205);
		\draw[line width = 0.25mm,densely dashed,gray] (3.25,3.4641) -- (6,3.4641);
		\draw[line width = 0.25mm,densely dashed,gray] (6,3.4641) -- (5,1.73205);
		\draw[line width = 0.25mm,densely dashed,gray] (6,3.4641) -- (7,2.1);
		%%% second line of elements 
		\draw[line width = 0.25mm,densely dashed,gray] (0,-0.5) -- (-2,0.5);
		\draw[line width = 0.25mm,densely dashed,gray] (-2,0.5) -- (-1,1.73205);
		\draw[line width = 0.25mm,densely dashed,gray] (-2,0.5) -- (-1,-1.73205);
		\draw[line width = 0.25mm,densely dashed,gray] (-2,0.5) -- (-2.5,-2);
		\draw[line width = 0.25mm,densely dashed,gray] (-2.5,-2) -- (-1,-1.73205);
		\draw[line width = 0.25mm,densely dashed,gray] (-2.5,-2) -- (-2,-3.4641);
		\draw[line width = 0.25mm,densely dashed,gray] (0,-0.5) -- (-1,1.73205);
		\draw[line width = 0.25mm,densely dashed,gray] (-1,1.73205) -- (1,1.73205);
		\draw[line width = 0.25mm,densely dashed,gray] (0,-0.5) -- (2,0);
		\draw[line width = 0.25mm,densely dashed,gray] (2,0) -- (1,1.73205);
		\draw[line width = 0.25mm,densely dashed,gray] (1,1.73205) -- (0,-0.5);
		\draw[line width = 0.25mm,densely dashed,gray] (2,0) -- (2.6,2.2);
		\draw[line width = 0.25mm,densely dashed,gray] (2.6,2.2) -- (1,1.73205);
		\draw[line width = 0.25mm,densely dashed,gray] (2,0) -- (4,0);
		\draw[line width = 0.25mm,densely dashed,gray] (4,0) -- (2.6,2.2);
		\draw[line width = 0.25mm,densely dashed,gray] (4,0) -- (2.6,2.2);
		\draw[line width = 0.25mm,densely dashed,gray] (2.6,2.2) -- (5,1.73205);
		\draw[line width = 0.25mm,densely dashed,gray] (5,1.73205) -- (4,0);
		\draw[line width = 0.25mm,densely dashed,gray] (4,0) -- (6,0);
		\draw[line width = 0.25mm,densely dashed,gray] (6,0) -- (5,1.73205);
		\draw[line width = 0.25mm,densely dashed,gray] (6,0) -- (7,2.1);
		\draw[line width = 0.25mm,densely dashed,gray] (7,2.1) -- (5,1.73205);
		\draw[line width = 0.25mm,densely dashed,gray] (6,0) -- (8,0);
		\draw[line width = 0.25mm,densely dashed,gray] (8,0) -- (7,2.1);
		%%% third line of elements 
		\draw[line width = 0.25mm,densely dashed,gray] (0,-0.5) -- (-1,-1.73205);
		\draw[line width = 0.25mm,densely dashed,gray] (-1,-1.73205) -- (1,-1.73205);
		\draw[line width = 0.25mm,densely dashed,gray] (2,0) -- (1,-1.73205);
		\draw[line width = 0.25mm,densely dashed,gray] (1,-1.73205) -- (0,-0.5);
		\draw[line width = 0.25mm,densely dashed,gray] (2,0) -- (3,-1.73205);
		\draw[line width = 0.25mm,densely dashed,gray] (3,-1.73205) -- (1,-1.73205);
		\draw[line width = 0.25mm,densely dashed,gray] (4,0) -- (3,-1.73205);
		\draw[line width = 0.25mm,densely dashed,gray] (2,0) -- (4,0);
		\draw[line width = 0.25mm,densely dashed,gray] (4,0) -- (3,-1.73205);
		\draw[line width = 0.25mm,densely dashed,gray] (3,-1.73205) -- (5,-1.73205);
		\draw[line width = 0.25mm,densely dashed,gray] (5,-1.73205) -- (4,0);
		\draw[line width = 0.25mm,densely dashed,gray] (4,0) -- (6,0);
		\draw[line width = 0.25mm,densely dashed,gray] (6,0) -- (5,-1.73205);
		\draw[line width = 0.25mm,densely dashed,gray] (6,0) -- (7,-1.73205);
		\draw[line width = 0.25mm,densely dashed,gray] (7,-1.73205) -- (5,-1.73205);
		\draw[line width = 0.25mm,densely dashed,gray] (6,0) -- (8,0);
		\draw[line width = 0.25mm,densely dashed,gray] (8,0) -- (7,-1.73205);
		%%% fourth line of elements 
		\draw[line width = 0.25mm,densely dashed,gray] (0,-3.4641) -- (-2,-3.4641);
		\draw[line width = 0.25mm,densely dashed,gray] (-2,-3.4641) -- (-1,-1.73205);
		\draw[line width = 0.25mm,densely dashed,gray]  (-1,-1.73205) -- (0,-3.4641);
		\draw[line width = 0.25mm,densely dashed,gray] (0,-3.4641) -- (1,-1.73205);
		\draw[line width = 0.25mm,densely dashed,gray] (0,-3.4641) -- (2,-3.4641);
		\draw[line width = 0.25mm,densely dashed,gray] (2,-3.4641) -- (1,-1.73205);
		\draw[line width = 0.25mm,densely dashed,gray] (2,-3.4641) -- (3,-1.73205);
		%%%% True boundary
		\draw [line width = 0.5mm,blue, name path=true] plot[smooth] coordinates {(-0.75,-3.681818) (1.75,0.75) (8.25,0.75)};
		%%%% Surrogate boundary
		\draw[line width = 0.5mm,red] (1,1.73205) -- (2.6,2.2);
		\draw[line width = 0.5mm,red] (2.6,2.2) -- (5,1.73205);
		\draw[line width = 0.5mm,red] (5,1.73205) --  (7,2.1);
		\draw[line width = 0.5mm,red] (1,1.73205) -- (0,-0.5);
		\draw[line width = 0.5mm,red] (0,-0.5) -- (-1,-1.73205);
		\draw[line width = 0.5mm,red] (-1,-1.73205) -- (-2,-3.4641);
		%% labels
		\node[text width=0.5cm] at (7.5,2.1) {\large${\color{red}\tG}$};
		\node[text width=3cm] at (1.25,1.25) {\large${\color{red}\tO}$};
		\node[text width=3cm] at (0.25,-3) {\large${\color{blue}\Om}$};
		\node[text width=0.5cm] at (8.75,0.75) {\large${\color{blue}\G}$};
		\node[text width=3cm] at (4.65,1.5) {\large$\Om \setminus \tO $};
		\node[text width=3cm] at (5.5,3.75) {\large$\tO  \subset \Om $};
		\end{tikzpicture}
		\caption{The true domain $\Om$, the surrogate domain $\tO \subset \Om$ and their boundaries $\ti{\G}_{h}$ and $\G$.}
		\label{fig:SBM}
	\end{subfigure}
	\hspace{2.5cm}
	\begin{subfigure}[b]{.4\textwidth}\centering
		\begin{tikzpicture}
		%%% second line of elements 
		\draw[line width = 0.25mm,densely dashed,gray] (0,0.5) -- (-1.5,3);
		\draw[line width = 0.25mm,densely dashed,gray] (-1.5,3) -- (0.5,5);
		\draw[line width = 0.25mm,densely dashed,gray] (0,0.5) -- (2.5,2);
		\draw[line width = 0.25mm,densely dashed,gray] (2.5,2) -- (0.5,5);
		\draw[line width = 0.25mm,densely dashed,gray] (0.5,5) -- (0,0.5);
		%%%% True boundary
		\draw [line width = 0.5mm,blue, name path=true] plot[smooth] coordinates {(1,-0.5) (2.25,2.5) (0.75,6)};
		%%%% Surrogate boundary
		\draw[line width = 0.5mm,red] (0,0.5) -- (0.5,5);
		%% labels
		\node[text width=0.5cm] at (0.5,5.5) {\large${\color{red}\tG}$};
		\node[text width=0.5cm] at (1.75,5.5) {\large${\color{blue}\G}$};
		\node[text width=0.5cm] at (1.25,3.25) {\large$\bs{\delta}$};
		\node[text width=0.5cm] at (3,3.5) {\large$\bs{n}$};
		\node[text width=0.5cm] at (2.7,2.25) {\large$\bs{\tau}$};
		%% arrows
		\draw[-stealth,line width = 0.25mm,-latex] (0.25,2.75) -- (2.12,3.1);
		\draw[-stealth,line width = 0.25mm,-latex] (2.12,3.1) -- (2.35,2.1);
		\draw[-stealth,line width = 0.25mm,-latex] (2.12,3.1) -- (2.95,3.25);
		\end{tikzpicture}
		\caption{The distance vector $\bs{\delta}$, the true normal $\bs{n}$ and the true tangent $\bs{\tau}$.}
		\label{fig:ntd}
	\end{subfigure}
	\caption{The surrogate domain, its boundary, and the distance vector $\bs{\delta}$.}
	\label{fig:surrogates}
\end{figure}
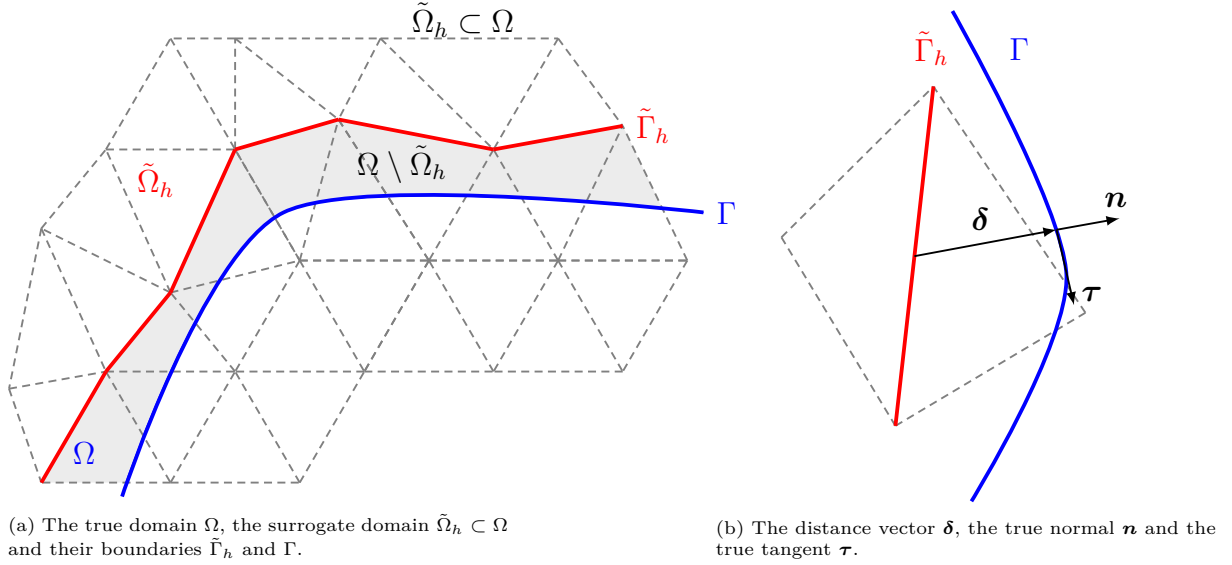
We now introduce a mapping
\begin{subequations}\label{eq:defMmap}
	\begin{align}
	\bs{M}_{h}:&\; \tG \to \G \; ,  \\
	&\; \ti{\bs{x}} \mapsto \bs{x}   \; ,
	\end{align}
\end{subequations}
which associates to any point $\ti{\bs{x}} \in \tG$ on the surrogate boundary a point $\bs{x} = \bs{M}_{h}(\ti{\bs{x}})$ on the physical boundary $\Gamma$.  Whenever uniquely defined, the closest-point projection of $\ti{\bs{x}}$ upon $\Gamma$ is a natural choice for $\bs{x}$, as shown e.g. in Figure~\ref{fig:ntd}. 
Through $\bs{M}_{h}$, a distance vector function $\bs{\delta}_{\bs{M}_{h}}$ can be defined as
\begin{align}
\label{eq:Mmap}
\bs{\delta}_{\bs{M}_{h}} (\ti{\bs{x}})
\, = \, 
\bs{x}-\ti{\bs{x}}
\, = \, 
[ \, \bs{M}_{h}-\bs{I} \, ] (\ti{\bs{x}})
\; .
\end{align}
For the sake of simplicity, we set $\bs{\delta} = \bs{\delta}_{\bs{M}_{h}} $ where $\bs{\delta} = \|\bs{\delta}\| \bs{\nu}$  and $\bs{\nu}$ is a unit vector. 
\begin{rem}
	If $\bs{x} = \bs{M}_{h}(\ti{\bs{x}})$ does not belong to corners or edges, then the closest-point projection implies $\bs{\nu}=\bs{n}$, where $\bs{n}$ was defined as the outward pointing normal to $\G$.
More sophisticated choices may be locally preferable in the presence of corners or edges and we refer to \cite{atallah2021analysis} for more details.
\end{rem}
\begin{rem}
There are strategies for the definition of the map $\bs{M}_{h}$ and distance $\bs{\delta}$ other than the closest-point projection, such as level sets, for which $\bs{\delta}$ is defined by means of a distance function.
\end{rem}
In case the boundary $\G$ is partitioned into a Dirichlet boundary $\G_{D}$ and a Neumann boundary $\G_{N}$ with $\G = \overline{\G_{D} \cup \G_{N}}$ and $\G_{D} \cap \G_{N} = \emptyset$, we need to identify whether a surrogate edge $\ti{e} \subset \tG$ is associated with $\G_{D}$ or $\G_{N}$. To that end, we partition $\tG$ as $\overline{\tGD \cup \tGN}$ with $\tGD \cap \tGN = \emptyset$ using again a map $\bs{M}_{h}$, such that
\begin{align}\label{def:surrogateDir}
\tGD = \{ \ti{e} \subseteq \tG : \bs{M}_{h}(\ti{e}) \, \subseteq \, \G_{D} \}
\end{align}
and $\tGN = \tG \setminus \tGD$. 

%Furthermore, we indicate by $h_T$ ($h^{i}_{T}$, resp.) the circumscribed diameter (inscribed diameter, resp.) of an element $T \in \ti{\cT}_h$ and by $h$ ($h^{i}$, resp.) the piecewise constant function in $\tO$ such that $h_{|T}=h_T$ ($h^{i}_{|T}= h^{i}_T$, resp.) for all $T \in \ti{\cT}_h$.

%-------------------------------------------%
\subsection{General strategy} 
In the SBM, the governing equations are discretized in $\tO$ rather than in $\Om$, with the challenge of accurately imposing boundary conditions on $\tG$.
To this end, boundary conditions are {\it shifted} from $\G$ to $\tG$, by performing the $m$th-order Taylor expansion of the variable of interest at the surrogate boundary. 

Under the assumption that $u$ is sufficiently smooth in the strip between $\tG$  and $\G$, so as to admit a $m$th-order Taylor expansion point-wise, let $\ssf{D}^{i}_{\bs{\delta}}$ denote the $i$th-order directional derivative along $\bs{\delta}$:
$$
\ssf{D}^{i}_{\bs{\delta}} u = \displaystyle{\sum_{\bs{\alpha} \in \mathbb{N}^n, |\bs{\alpha}|=i} \frac{i!}{\bs{\alpha}!}   \frac{\partial^i u}{\partial \bs{x}^{\bs{\alpha}}} \bs{\delta}^{\bs{\alpha}}  } \; . 
$$
Then,  we can write
\begin{align}
u(\bs{x}) = u(\ti{\bs{x}}+\bs{\delta}(\ti{\bs{x}}))=u(\tx) +  \sum_{i = 1}^{m}  \frac{\ssf{D}^{i}_{\bs{\delta}} \, u(\ti{\bs{x}})}{i!} + (\ssf{R}^{m}(u,\bs{\delta}))(\tx)\,, %\qquad m \geq 1
\end{align}
where the remainder $\ssf{R}^{m}(u,\bs{\delta})$ satisfies  $|\ssf{R}^{m}(u,\bs{\delta})| = o(\Vert \bs{\delta}\Vert^m)$ as $\Vert \bs{\delta}\Vert \to 0$.
Assume that the Dirichlet condition $u(\bs{x})=u_D(\bs{x})$ needs to be imposed on the true boundary $\G$.
Using the map $\bs{M}_h$, one can extend $u_D$ from $\G$ to $\tG$ as $\bar{u}_D(\tx)=u_D(\bs{M}_h (\tx))$. Then, the Taylor expansion can be used to enforce the Dirichlet condition on $\tG$ rather than $\G$, as 
\begin{equation}\label{eq:trace-u}
\tS^{m} u  - \bar{u}_D + \ssf{R}^{m}(u,\bs{\delta}) = 0 
\; , \qquad \mbox{on } \tG \; ,
\end{equation}
where we have introduced the boundary {\it shift} operator for every $\ti{\bs{x}} \in \tG$, namely:
\begin{equation}\label{eq:def-bndS}
\tS^{m} u(\ti{\bs{x}}) := u(\ti{\bs{x}}) + \sum_{i = 1}^{m}  \frac{\ssf{D}^{i}_{\bs{\delta}} \, u (\ti{\bs{x}})}{i!}  \; .
\end{equation}
Neglecting the remainder $\ssf{R}^{m}(u,\bs{\delta})$, we obtain the final expression of the {\it shifted} approximation of order $m$ of the boundary condition
\begin{equation}\label{eq:Finalu-g}
\tS^{m} u \approx \bar{u}_D  \, , \quad  \mbox{on } \tG \; .
\end{equation}
This shifted boundary condition will be enforced weakly in what follows, and whenever there is no source of confusion, the bar symbol will be removed from the extended quantities, and we would write $u_D$ in place of $\bar{u}_D$.

\subsection{General notation}

Throughout the paper, we will denote the space of square integrable functions on $\Om$ as $L^{2}(\Om)$.
 We will use the Sobolev spaces $H^m(\Om)=W^{m,2}(\Om)$ of index of regularity $m \geq 0$ and index of summability 2, equipped with the (scaled) norm
\begin{equation}
\|v \|_{H^{m}(\Om)} 
= \left( \| \, v \, \|^2_{L^2(\Om)} + \sum_{k = 1}^{m} \| \, l(\Om)^k \ssf{D}^k v \, \|^2_{L^2(\Om)} \right)^{1/2} \; ,
\end{equation}
where $\ssf{D}^k$ is the $k$th-order spatial derivative operator and $l(A)=\mathrm{meas}_{d}(A)^{1/d}$ is a characteristic length of the domain $A$ ($d=2,3$ indicates the number of spatial dimensions). Note that $H^0(\Om)=L^{2}(\Om)$.  As usual, we use a simplified notation for norms and semi-norms, i.e., we set $\| \, v  \, \|_{m,\Om}=\|\, v \, \|_{H^m(\Om)}$ and $| \, v \, |_{k,\Om}= 
\| \,\ssf{D}^k v \,\|_{0,\Om}= \| \,\ssf{D}^k v \, \|_{L^2(\Om)}$.

We also introduce the definition of the $L^{2}$-inner product over $\om \in \Om$, namely $( \, u \, , \, v  \, )_{\om} = \int_{\om} u \, v$, and an analogous inner product on the portion of the boundary $\gamma \subset \partial \Om$, namely $\avg{ u \, , \, w }_{\gamma} = \int_{\gamma} u \, w$.
We can also restrict to $\omega$ and $\gamma$ the norms and seminorms initially defined on $\Omega$ and $\Gamma$, that is $\| \cdot \|_{k,\omega}$, $| \cdot |_{k,\omega}$ and $\| \cdot \|_{0,\gamma}$, for example.

%-------------------------------------------%
\section{A penalty-free Shifted Boundary Method for the Poisson equation \label{sec:sbm_poisson}}
Consider the Poisson problem with  non-homogeneous Dirichlet boundary conditions:
\begin{subequations}
	\label{eq:SteadyPoisson}
\begin{align}
- \Delta u &=\; f  \qquad \text{\ \ in \ } \Om \; ,
 \\
u &=\;  u_D \qquad \text{on \ }  \G = \partial\Om\; ,
\end{align}
\end{subequations}
where $u$ is the primary variable, $u_D$ its value on the boundary $\G$ and $f$ a body force (i.e., non-homogeneous boundary conditions are enforced on the entire boundary $\G$).

Let us introduce the discrete space 
\begin{align}
V^k_h(\tO)  = \; \left\{ v_h \in C^0(\tO)  \ | \ {v_h}_{|\ti{T}} \in \mathcal{P}^k(\ti{T})  \, , \, \forall \ti{T} \in \ti{\mathcal{T}}_h \right\}  \, ,
\end{align}
where $\mathcal{P}^k(\ti{T})$ is the the space of polynomials of at most order $k$ over the triangle $\ti{T}$.
We propose a penalty-free Shifted Boundary Method, inspired by the antisymmetric  version of the Nitsche's method~\cite{nitscheweak,arnold2002unified} that was analyzed by Burman {\cite{burman12}}. 
Hence, the penalty-free variational statement of~\eqref{eq:SteadyPoisson} can be cast as 
\begin{quote}
Find $u_h \in V^k_h(\tO)$ such that, $\forall w_h \in V^k_h(\tO)$
\begin{subequations}
\label{eq:SB_Poisson_uns}
\begin{align}
a^k_h(u_h \, , \, w_h) 
&=\; 
l_h(w_h) 
\; ,
\end{align}
where
\begin{align}
\label{eq:UnsymmetricShiftedNitscheBilinearForm}
a^k_h(u_h \, , \, w_h) 
&=\; ( \, \nabla u_h \, , \, \nabla w_h  \, )_{\tO} 
- \avg{ \nabla u_h  \cdot \ti{\bs{n}} \, , \, w_h }_{\tG}
+  \avg{ \tS^k u_h \, , \, \nabla w_h \cdot \ti{\bs{n}} }_{\tG}
\; ,
\\[.2cm]
\label{eq:UnsymmetricShiftedNitscheRHS}
l_h(w_h) 
&=\; 
( \, f \, , \, w_h \, )_{\tO} 
+ \avg{ \bar{u}_D \, , \, \nabla w_h \cdot \ti{\bs{n}} }_{\tG}
\; .
\end{align}
\end{subequations}
\end{quote}

\subsection{Main theoretical results}
We introduce the natural $h$-dependent norm
\[ \|u_{\nosymbol} \|_h = \left( |u|^2_{1, \tO} + \| h^{-1/2} \, u\|_{0,
   \tG}^2 \right)^{\frac{1}{2}} \]
and prove stability by way of an inf-sup condition, under the following assumptions:
\begin{assumption}
\label{assu:1}
Denoting $\delta (\ti{\bs{x}}) = |\bs{\delta}(\ti{\bs{x}}) |$ the Euclidean norm of $\bs{\delta}$ at $\ti{\bs{x}} \in \tG$, we assume that
\begin{align}
\label{eq:A1}
\max_{\ti{\bs{x}} \in \tG} \delta (\ti{\bs{x}}) \leqslant C_{\max} h \; .
\end{align}
\end{assumption}
\begin{assumption}
\label{assu:2}
Let $C_{I,k}$ be the constant in the trace inverse inequality $\|\tn \cdot \nabla u_h \|_{0, \ti{e}} \leqslant C_{I,k}  \, | h_{\ti{T}}^{1/2} \, u_h |_{1, \ti{T}}$, for any $u_h \in V^k_h(\tO)$ and for any side $\ti{e} \subset \partial \ti{T}$ of the element $\ti{T} \in \ti{\cT}_h$ (i.e., an edge/face in two/three dimensions).
   We assume that $\exists \beta \in [0, 1)$ such that
\begin{align}
\label{eq:A2}
\| \tS^k u_h - u_h \|_{0, \tG} \leqslant \frac{\beta}{C_{I,k}} \, | h^{1/2} \, u_h |_{1, \tO} \; .
\end{align}
\end{assumption}
\begin{assumption}
\label{assu:3}
   In two dimensions, let $\ti{T} \in \ti{\cT}_h$ be a triangle of the mesh with a side $\ti{e} \subset \tG$. 
   We call $\ti{T}$ a ``normal boundary triangle'' if one of its vertices lies in $\tO \setminus \tG$.
   Otherwise (i.e., if all the 3 vertices of the triangle are on $\tG$) we call $\ti{T}$ an ``abnormal boundary triangle.'' 
   We shall suppose that every abnormal boundary triangle lies at a distance $d \leqslant c_{\ti{T}} h$ from a normal boundary triangle, where the constant $c_{\ti{T}}$ is not large.
   The three-dimensional case is analogous.
\end{assumption}
\begin{assumption}
	\label{ass:NeumannAssumption}
	The Neumann boundary is body-fitted, that is $\G_N = \tGN$.
\end{assumption}
\begin{rem}[validity of Assumption~\ref{assu:1} and~\ref{assu:2}]
Assumption~\ref{assu:1} is normally satisfied with a constant $C_{\max}$ of order 1. 
Assumption~\ref{assu:2} is inspired by a similar assumption made in~\cite{atallah2021analysis} but is more difficult to satisfy in practice and should be considered as a technical condition for the proofs.
In fact, this condition may not be satisfied by the computational grids used in the numerical experiments in Section~\ref{sec:numerical_results}, despite obtaining optimal convergence rates in all simulations.
Note that the $h^{1/2}$ factor appears naturally in Assumption~\ref{assu:2}, since on each side $\ti{e} \subset \partial \ti{T}$ we have 
$$\| \tS^k u_h - u_h \|_{0, \ti{e}} \sim h_{\ti{T}} \, | \nabla u_h | \sqrt{|\ti{e}|} \sim \sqrt{h_{\ti{T}}} \,  | \nabla u_h | \sqrt{|\ti{T}|} \sim \sqrt{h_{\ti{T}}} \,  \| \nabla u_h \|_{0, \ti{T}} \; . $$
The practical difficulty in this assumption consists in requiring that $\beta < 1$.
\end{rem}
\begin{rem}[validity of Assumption~\ref{assu:3}]
For practical purposes, Assumption~\ref{assu:3} can be safely accepted.
While Assumption~\ref{assu:3} could be violated in principle, this happens very rarely in practice.

In fact, the way elements are connected in regular grids prevents the edges/faces associated to several abnormal elements to form a chain along the surrogate boundary. 
Even in the most complex three-dimensional geometries, a chain of abnormal boundary tetrahedral elements (each of which with all four nodes on the surrogate boundary) is highly unlikely. Isolated abnormal boundary tetrahedrons are possible, but intermixed with many normal boundary tetrahedrons, and Assumption~\ref{assu:3} would then hold.
\end{rem}
\begin{rem}
Although we restrict the numerical analysis to a pure Dirichlet problem, we will present numerical examples with mixed Dirichlet/Neumann boundary conditions.
\end{rem}

\begin{lemma}
\label{lemma:inf_sup}
Assume Assumptions~\ref{assu:1}, ~\ref{assu:2}, and~\ref{assu:3} hold. Then there exists $\gamma > 0$ depending only on  $C_{\max}$, $\beta$, and the regularity of the mesh such that
\[ \inf_{u_h \in V^k_h(\tO)} \,  \sup_{v_h \in V^k_h(\tO)}  \, \frac{a^k_h(u_h \, , \, w_h)}{\|u_h \|_h \|v_h \|_h} \geq \gamma \; . \]
\end{lemma}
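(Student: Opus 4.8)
The plan is to establish the inf-sup condition directly: given $u_h \in V^k_h(\tO)$, I will construct an explicit test function $w_h$ for which $a^k_h(u_h, w_h)$ controls $\|u_h\|_h^2$ from below while $\|w_h\|_h \lesssim \|u_h\|_h$. The natural first guess is $w_h = u_h$ itself, which gives
\[
a^k_h(u_h, u_h) = |u_h|^2_{1,\tO} - \avg{\nabla u_h \cdot \tn, u_h}_{\tG} + \avg{\tS^k u_h, \nabla u_h \cdot \tn}_{\tG} = |u_h|^2_{1,\tO} + \avg{\tS^k u_h - u_h, \nabla u_h \cdot \tn}_{\tG}.
\]
The antisymmetric structure kills the troublesome boundary term $\avg{\nabla u_h \cdot \tn, u_h}_{\tG}$ exactly, leaving only the consistency-error term $\avg{\tS^k u_h - u_h, \nabla u_h \cdot \tn}_{\tG}$. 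By Assumption~\ref{assu:2} together with the trace inverse inequality of Assumption~\ref{assu:2}'s statement (constant $C_{I,k}$), Cauchy-Schwarz bounds this term by $\|\tS^k u_h - u_h\|_{0,\tG} \, \|\nabla u_h \cdot \tn\|_{0,\tG} \leq \frac{\beta}{C_{I,k}} |h^{1/2} u_h|_{1,\tO} \cdot C_{I,k} |h^{-1/2} \nabla u_h \cdot \tn|$... more carefully, summing the trace inverse inequality over boundary elements gives $\|\nabla u_h \cdot \tn\|_{0,\tG} \leq C_{I,k} |h^{-1/2} u_h|_{1,\tO_{\mathrm{b}}}$ where $\tO_{\mathrm{b}}$ is the layer of boundary elements, so the product is bounded by $\beta |u_h|^2_{1,\tO}$. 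Hence $a^k_h(u_h,u_h) \geq (1-\beta)|u_h|^2_{1,\tO}$, which is positive since $\beta < 1$ but does \emph{not} control the boundary norm $\|h^{-1/2} u_h\|_{0,\tG}$.

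To recover control of the boundary term, I will add a correction: take $w_h = u_h + \varepsilon w_h^\star$, where $w_h^\star \in V^k_h(\tO)$ is chosen so that $\avg{\nabla w_h^\star \cdot \tn, u_h}_{\tG}$ reproduces (a positive multiple of) $\|h^{-1/2} u_h\|^2_{0,\tG}$, while $w_h^\star$ is localized to the boundary layer with $\|w_h^\star\|_h \lesssim \|h^{-1/2} u_h\|_{0,\tG}$. Concretely, on each normal boundary triangle $\ti{T}$ with side $\ti{e} \subset \tG$ one can pick $w_h^\star$ supported on $\ti{T}$ whose normal derivative on $\ti{e}$ equals $h_{\ti{T}}^{-1}$ times the trace of $u_h$ (this is where Assumption~\ref{assu:3} enters: abnormal triangles have no interior vertex, so this local construction could be degenerate on them, and one instead borrows the value from a nearby normal triangle at distance $\leq c_{\ti{T}} h$, using a discrete trace/norm-equivalence argument to transfer the bound). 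Evaluating $a^k_h(u_h, \varepsilon w_h^\star)$ produces the desired term $\varepsilon\, c\, \|h^{-1/2} u_h\|^2_{0,\tG}$ from the $-\avg{\nabla u_h \cdot \tn, \varepsilon w_h^\star}_{\tG}$... wait, it is actually the $-\avg{\nabla(\varepsilon w_h^\star)\cdot\tn, u_h}$-type pairing once we symmetrize the bookkeeping; the remaining cross terms ($(\nabla u_h, \varepsilon \nabla w_h^\star)_{\tO}$, the two boundary pairings involving $\tS^k$) are absorbed by Young's inequality at the cost of reintroducing a small multiple of $|u_h|^2_{1,\tO}$ and $\|h^{-1/2} u_h\|^2_{0,\tG}$, all controllable by fixing $\varepsilon$ small relative to $1-\beta$.

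Putting these together yields $a^k_h(u_h, w_h) \geq \gamma_1 |u_h|^2_{1,\tO} + \gamma_2 \|h^{-1/2} u_h\|^2_{0,\tG} \geq \gamma \|u_h\|_h^2$ with $\gamma$ depending only on $\beta$, $C_{\max}$, $c_{\ti{T}}$ and the shape-regularity constants, together with $\|w_h\|_h \leq (1 + C\varepsilon)\|u_h\|_h \lesssim \|u_h\|_h$; dividing gives the claimed bound. The main obstacle I anticipate is the construction and analysis of $w_h^\star$ on and near the abnormal boundary triangles — ensuring that the locally-defined bump function has the right normal derivative on the surrogate edge, that its $\|\cdot\|_h$-norm is controlled by the boundary trace of $u_h$ (a scaling/inverse-inequality exercise using quasi-uniformity), and that the transfer of this control from a normal triangle to a neighboring abnormal one via Assumption~\ref{assu:3} does not lose a power of $h$. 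The interior-term bookkeeping and the choice of $\varepsilon$ are routine Young-inequality manipulations once the boundary-layer estimates are in place.
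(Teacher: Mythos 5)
Your proposal is correct in outline and follows essentially the same route as the paper: test first with $u_h$ itself (absorbing $\avg{\tn\cdot\nabla u_h,\tS^k u_h-u_h}_{\tG}$ via Assumption~\ref{assu:2} to get $(1-\beta)|u_h|^2_{1,\tO}$), then add a small multiple of a boundary-layer corrector whose normal derivative on each surrogate edge scales like $h^{-1}$ times the trace of $u_h$, treating abnormal triangles by transferring the boundary norm to nearby normal edges via Assumption~\ref{assu:3}, and concluding with Young's inequality; the paper realizes your $w_h^\star$ concretely as the $V^1_h$ function vanishing at interior nodes and equal to $\ssf{I}^1_h u_h$ on $\tG$. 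The only blemish is your sign hesitation in the middle: the positive boundary contribution comes from the $+\avg{\tS^k u_h,\nabla w_h^\star\cdot\tn}_{\tG}$ term of the bilinear form, exactly as in the paper.
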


\begin{proof}
Take any $u_h \in V^1_h(\tO)$. We construct first a test function $w_h \in V^1_h(\tO) \subset V^k_h(\tO)$ such that
\begin{subequations}
\begin{equation}
    \langle \,  \tn \cdot \nabla w_h, \tS^k u_h \, \rangle_{\tG} \geq c_1 \, \| h^{-1} \, u_h \|_{0, \tG}^2 - c_2 |u_h |_{1, \tO}^2
    \label{wh1}
\end{equation}
and
\begin{equation}
    \|w_h \|_h \leq c_3 \|u_h \|_h \label{wh2}
    \; .
  \end{equation}
\end{subequations}
This is achieved by taking $w_h \in V^1_h(\tO)$ such that $w_h$ vanishes at all the interior nodes and
\[ w_h = \ssf{I}^1_h u_h \; , \qquad \text{on } \tG \; ,  \]
where $\ssf{I}^1_h$ the nodal linear interpolation operator to $V^1_h(\tO)$.
We will restrict the proof of \eqref{wh1} to the two-dimensional case, since the three-dimensional case would follow very similar steps, although with more tedious derivations. 
Observe first that 
\[ \langle \,  \tn \cdot \nabla w_h, \tS^k u_h \, \rangle_{\tG} = \langle \,  \tn \cdot \nabla w_h, u_h \, \rangle_{\tG} - \langle \,  \tn \cdot \nabla w_h, u_h - \tS^k u_h \, \rangle_{\tG} \; , \]
and consider now an edge $\ti{e} \subset \tG \cap \partial \ti{T}$, for a corresponding element $\ti{T}$.
Applying Assumption~\ref{assu:1}, appropriate inverse and trace inequalities (which also rely on the equivalency of discrete norms), we obtain
\[ \| \tS^k u_h - u_h \|_{0,\ti{e}} 
\lesssim h_{\ti{e}}^{3 / 2} \| \nabla u_h \|_{L^{\infty} (\ti{e})} 
\lesssim h_{\ti{e}}^{1 / 2} \| \nabla u_h \|_{0, \ti{e}} 
\leqslant Ch_{\ti{T}}^{1 / 2} |u_h |^{\nosymbol}_{1, \ti{T}} 
\; , \]
where the symbol $\lesssim$ indicates inequality up to a constant.
By analogous inverse inequalities, since $w_h$ is fully determined by $u_h$ on the boundary $\tG$, we have
\[ \| \tn \cdot \nabla w_h \|_{0,\ti{e}} \leqslant \frac{C}{h_{\ti{T}}} \| u_h \|_{0,\ti{e}} \; . \]
Hence
\begin{equation}
    \label{ExtraTermTau} \langle \,  \tn \cdot \nabla w_h, \tS^k u_h \, \rangle_{\tG} \geqslant \langle \,  \tn \cdot \nabla w_h, u_h \, \rangle_{\tG} - \frac{C}{\sqrt{h}} \| u_h \|_{0, \tG} | u_h |_{1, \tO}
\end{equation}
and to prove \eqref{wh1} it is sufficient to bound from below $\langle \,  \tn \cdot \nabla w_h, u_h \, \rangle_{\tG}$, which can be done edge by edge. 
First, consider any boundary edge ${\ti{e}}$ associated with a {\it normal} boundary triangle ${\ti{T}}$.
Let $x_1, x_2$ be the endpoints of ${\ti{e}}$ and $x_3$ the remaining vertex of ${\ti{T}}$.
Since ${\ti{T}}$ is a normal boundary triangle, we are sure that $x_3$ is not on the boundary, hence $w_h (x_3) = 0$.
Let $x_{\ti{T}}$ be the point on the segment $x_1 x_2$ such that the segments $x_{\ti{T}} x_3$ and $x_1 x_2$ are perpendicular and let $h_{\ti{T}}$ be the distance between $x_{\ti{T}}$ and $x_3$ (i.e., the height of ${\ti{T}}$).
  Recalling that $w_h (x_3) = 0$ we observe
  \[ \tn \cdot \nabla w_h = \frac{w_h (x_{\ti{T}})}{h_{\ti{T}}} = \frac{\ssf{I}^1_h u_h (x_{\ti{T}})}{h_{\ti{T}}} \]
  with $\ti{\bs{n}}$ the outward unit normal on the edge ${\ti{e}}$. We have thus
  \begin{equation}
    \int_{\ti{e}} (\tn \cdot \nabla w_h) \, u_h = \int_{\ti{e}} \frac{\ssf{I}^1_h u_h (x_{\ti{T}})}{h_{\ti{T}}} \, u_h =
    \int_{\ti{e}} \frac{u_h^2}{h_{\ti{T}}} - \int_{\ti{e}} \frac{u_h - \ssf{I}^1_h u_h (x_{\ti{T}})}{h_{\ti{T}}} \, u_h
    \; .
    \label{intw}
  \end{equation}
  As $x_{\ti{T}}$ lies on the straight line passing through $x_1$ and $x_2$, there holds   $x_{\ti{T}} = ax_1 + (1 - a) x_2$ with  $a\in\mathbb{R}$ satisfying $\max(|a|,|1-a|)\le M$ with some $M>0$ depending only on the mesh regularity (in fact, in the most typical case of an acute triangle ${\ti{T}}$, one actually has $a\in [0, 1]$, but we also allow for obtuse mesh cells). Since $\ssf{I}^1_hu_h$ is an affine function on $\ti{e}$ taking the value $u_h(x_1),u_h(x_2)$ at points $x_1,x_2$ respectively,  we have
  \[ \ssf{I}^1_h u_h (x_{\ti{T}}) = au_h (x_1) + (1 - a) u_h (x_2) \]
  and, for all $x \in {\ti{e}}$,
  \[ |u_h (x) - \ssf{I}^1_hu_h (x_{\ti{T}}) | \leq |a|  |u_h (x) - u_h (x_1) | + |1 - a| |u_h (x) - u_h (x_2) | \leq 2M h_{\ti{e}} \max_{x \in {\ti{T}}} | \nabla u_h (x) | \; . \] 
%  {\color{magenta}
%  
%  {\color{blue} GS: the general proof}
%  %
% Since $x_{\ti{T}}$ is not
%  necessarily inside the segment $[x_1, x_2]$, we treat $w_h$ in this formula
%  as a polynomial defined everywhere on $\mathbb{R}^2$ and coinciding with
%  $w_h |_{\ti{T}}$ on $K$ (the same abuse of notations will be also applied below to
%  the polynomial $u_h$). We have thus
%  \begin{equation}\label{intw}
%    \int_{\ti{e}} (\tn \cdot \nabla w_h)  (u_h +\bs{\delta} \cdot \nabla u_h) =
%    \int_{\ti{e}} \frac{u_h (x_{\ti{T}})}{h_{\ti{T}}}  (u_h +\bs{\delta} \cdot \nabla u_h) =
%    \int_{\ti{e}} \frac{u_h^2}{h_{\ti{T}}} - \int_{\ti{e}} \frac{u_h - w_h (x_{\ti{T}})}{h_{\ti{T}}} u_h
%     + \int_{\ti{e}} \frac{u_h (x_{\ti{T}})}{h_{\ti{T}}} \bs{\delta} \cdot \nabla
%    u_h
%  \end{equation}
%  By the regularity of the mesh, $x_{\ti{T}} = ax_1 + (1 - a) x_2$ with some $a \in
%  \mathbb{R}$, $|a| \leq M$ with $M$ depending only on the mesh regularity.
%  Thus
%  \[ w_h (x_{\ti{T}}) = au_h (x_1) + (1 - a) u_h (x_2) \]
%  and for all $x \in {\ti{e}}$
%  \[ |u_h (x) - w_h (x_{\ti{T}}) | \leq |a|  |u_h (x) - u_h (x_1) | + |1 - a||u_h (x)
%     - u_h (x_2) | \leq 2 M^2 h_{\ti{e}} \max_{x \in B_M} | \nabla u_h (x) | \]
%  with $h_{\ti{e}}$ the length of ${\ti{e}}$ and $B_M$ the ball of radius $Mh_{\ti{e}}$ centered at
%  the midpoint of ${\ti{e}}$. 
%   }
  Using the inverse inequality
\[ \max_{x \in {\ti{T}}} | \nabla u_h (x) | \leq \frac{C_I}{h_{\ti{e}}^{\nosymbol}} |u_h
|^{\nosymbol}_{1, {\ti{T}}} \; , \]
it follows that
\[ \|u_h - u_h (x_{\ti{T}}) \|_{0, {\ti{e}}}  \leq C \sqrt{h_{\ti{T}}} |u_h |^{\nosymbol}_{1, {\ti{T}}} \; , \]
which can be substituted inside \eqref{intw} to yield
\begin{align*}
	\int_{\ti{e}} (\tn \cdot \nabla w_h) \, u_h & \geq
	\frac{1}{h_{\ti{T}}} \|u_h \|^2_{0, {\ti{e}}} - \frac{C}{\sqrt{h_{\ti{T}}}} |u_h|_{1, {\ti{T}}} \|u_h\|_{0, {\ti{e}}} \; .
\end{align*}
Summing this over all the boundary edges belonging to normal boundary cells
and combining with \eqref{ExtraTermTau} gives
	\begin{equation}\label{Almostwh1} \langle \,  \tn \cdot \nabla w_h, \tS^k u_h \, \rangle_{\tG} \geqslant
	 \| h^{-1/2} u_h \|^2_{0, \tG^n} - {C} \, \| h^{-1/2} u_h \|_{0, \tG} | u_h |_{1, \tO} - \langle \,  \tn \cdot \nabla w_h, u_h
	\, \rangle_{\tG^a} 
	\end{equation}
	where $\tG^n$ regroups the boundary edges from normal boundary cells,
	and $\tG^a$ regroups the boundary edges from abnormal boundary cells. This would immediately lead to \eqref{wh1} in the case $\tG=\tG^n$, that is when all the boundary cells are normal. 
	
However, $\tG^a\not=\emptyset$ in general and given any abnormal boundary cell ${\ti{T}}$ with all three vertices on $\tG$, we have $w_h = \ssf{I}^1_h u_h$, on the whole of ${\ti{T}}$.
	Hence, $| w_h |_{1, {\ti{T}}} \leqslant C | u_h |_{1, {\ti{T}}}$ and consequently $\| \ti{\bs{n}}
	\cdot \nabla w_h \|_{0, \partial {\ti{T}}} \leqslant C \, h_{\ti{T}}^{-1/2} | u_h |_{1, {\ti{T}}}$. Applying this to all abnormal boundary cells gives
	\begin{equation} \label{Gammahan1}
		\langle \,  \tn \cdot \nabla w_h, u_h \, \rangle_{\tG^a} \leqslant
	{C} |  u_h |_{1, \tO} \| h^{-1/2} u_h \|_{0, \tG^a} 
	\; .
	\end{equation}
In view of Assumption~\ref{assu:3}, to pass from the norm on $\tG^a$ to that on $\tG^n$, let us consider a
boundary edge $\ti{e}^a$ from an abnormal cell sharing a vertex $v$ with an edge
$\ti{e}^n$ from a normal cell. We have then
\begin{align}
\|u_h \|_{0, \ti{e}^a}^2 \leqslant C (h_{\ti{e}^a} | u_h (v)  |^2 + h_{\ti{e}^a}^2 \| \nabla u_h
\|^2_{0, {\ti{e}^a}}) \leqslant C (\| u_h \|^2_{0, \ti{e}^n} + h_{{\ti{e}^n}}^2 \| \nabla u_h
\|^2_{0, {\ti{e}^n}} + h_{\ti{e}^a}^2 \| \nabla u_h \|^2_{0, {\ti{e}^a}}) 
\; . 
\label{eq:bau}
\end{align}
If a chain of connected abnormal boundary edges is attached to a normal edge, similar inequalities hold, with constants that depend on the length of such a chain. 
Note in particular that this length is uniformly bounded, under Assumptions~\ref{assu:1}, ~\ref{assu:2}, and ~\ref{assu:3}.
Summing~\eqref{eq:bau} on all the abnormal edges gives
	\begin{equation}
		\label{Gammahan} \|h^{-1/2}  u_h \|_{0, \tG^a} \leqslant C (\| h^{-1/2} u_h \|_{0,
			\tG^n} + \| h^{1/2}  \, \nabla u_h \|_{0, \tG}) \leqslant C \left( \| h^{-1/2} u_h
		\|_{0, \tG^n} + | u_h |_{1, \tO} \right)
		\; ,
	\end{equation}
which can be used in \eqref{Gammahan1} to obtain
	\[ \langle \,  \tn \cdot \nabla w_h, u_h \, \rangle_{\tG^a} \leqslant
	C \, | u_h |_{1, \tO} \| h^{-1/2}  u_h \|_{0, \tG^n} + C \, | u_h |_{1, \tO}^2 . \]
Equation \eqref{Almostwh1} now entails
	\[ \langle \,  \tn \cdot \nabla w_h, \tS^k u_h \, \rangle_{\tG} \geqslant
	 \| h^{-1/2}  u_h \|^2_{0, \tG^n} - {C}  \, \| h^{-1/2} u_h \|_{0, \tG} | u_h |_{1, \tO} - C  \, | u_h |_{1, \tO}^2 \; . \]
Moreover, \eqref{Gammahan} can be rewritten as
	\[ \| h^{-1/2} u_h \|_{0, \tG}^2 \leqslant C  \,\left(\| h^{-1/2} u_h \|^2_{0, \tG^n} + |u_h |^2_{1, \tO} \right) \; . \]
Hence,
	\[ \langle \,  \tn \cdot \nabla w_h, \tS^k u_h \, \rangle_{\tG}
	\geqslant \frac{1}{C}  \, \| h^{-1/2} u_h \|^2_{0, \tG} - {C}  \, \|h^{-1/2} u_h \|_{0, \tG} | u_h |_{1, \tO} - C  \, | u_h |_{1, \tO}^2 \; , \]
which, with Young inequality, yields \eqref{wh1}. 

\begin{rem}
Note that there are also boundary triangles, with only one node on $\tG$ and two nodes on the interior. These triangles pose no problem in this part of the proofs, since they do not contribute to boundary integrals. They will have role later on in the convergence proofs.
\end{rem}

Proving \eqref{wh2} is easy. Indeed,
\[ \|w_h \|_h = \left( |w_h |_{1, \tO}^2 + \| h^{-1/2} \, \ssf{I}^1_h u_h
\|_{0, \tG}^2 \right)^{\frac{1}{2}} \; . \]
By inverse inequalities (equivalence of norms on every facet),
\[ \|h^{-1/2} \, \ssf{I}^1_h u_h \|_{0, \tG} \leqslant C \|h^{-1/2} u_h \|_{0, \tG} \]
Moreover, recalling that $w_h$ is fully determined on any cell having a facet (normal or abnormal) on $\tG$ by its values on the $\tG$-part of this cell boundary, we get by equivalence of norms on such cells and on their adjacent cells,
\[ |w_h |_{1, \tO} \leqslant C \| h^{-1/2} w_h \|_{0, \tG}
= {C} \|h^{-1/2} \, \ssf{I}^1_h u_h \|_{0, \tG}  \; , \] 
Hence,
\[ \|w_h \|_h \leqslant {C} \, \| h^{-1/2} u_h \|_{0, \tG} \leqslant C  \, \|u_h \|_h \; .\]

Taking now $v_h = u_h + \alpha w_h$ yields
\[ a^k_h (u_h, v_h) = (\nabla u_h, \nabla u_h)_{\tO} + \langle \,  \tn \cdot \nabla u_h, \tS^k u_h - u_h \, \rangle_{\tG}
%  \]
%  \[ 
+ \alpha (\nabla u_h, \nabla w_h)_{\tO} - \alpha \langle \,  \tn \cdot \nabla u_h, w_h \, \rangle_{\tG} + \alpha \langle \,  \tn \cdot \nabla w_h, \tS^k u_h \, \rangle_{\tG} \; . \]
Observe also that
\[ | \langle \,  \tn \cdot \nabla u_h, \tS^k u_h - u_h \, \rangle_{\tG} |
\leqslant \|\tn \cdot \nabla u_h \|_{0, \tG}  \|\tS^k u_h - u_h
\|_{0, \tG} \leqslant \beta |u_h |_{1, \tO}^2 \]
thanks to Assumption~\ref{assu:2}.
We thus conclude by making use of all the previous estimates:

\begin{align*}
	a^k_h (u_h, v_h) 
	&\geq\;
	(1 - \beta) |u_h |_{1, \tO}^2 + \alpha c_1 \| h^{-1/2}  u_h \|_{0, \tG}^2 - \alpha c_2 |u_h |_{1,\tO}^2
	- \alpha |u_h |_{1, \tO}^{\nosymbol}  |w_h |_{1,\tO}^{\nosymbol} 
	\\
	&\phantom{\geq}\;
	- {\alpha C_I} |u_h |_{1,\tO}^{\nosymbol} \| h^{-1/2}  w_h \| \nobracket_{0, \tG}
	\\
	&\geq\;
	\left(1 - \beta - \alpha \frac{1 + C_I^2}{2 \varepsilon} - \alpha c_2 \right) |u_h |_{1, \tO}^2 - \frac{\alpha \varepsilon}{2} \|w_h \|_h^2 +
	\alpha c_1 \| h^{-1/2}  u_h \|_{0, \tG}^2
	\\
	&\geq\;
	\left( 1 - \beta - \alpha \frac{1 + C_I^2}{2 \varepsilon} - \alpha
	c_2 - \frac{\alpha \varepsilon c_3^2}{2} \right) |u_h |_{1, \tO}^2 +
	\left( \alpha c_1 - \frac{\alpha \varepsilon c_3^2}{2} \right) 
	\| h^{-1/2}  u_h \|_{0, \tG}^2 \\
	&\geq\;
	c_4 \, \|u_h \|_h^2 \; .
\end{align*}
The last bound is achieved by taking $\varepsilon = {c_1}/{c_3^2}$ and then
$\alpha$ sufficiently small, so that
$$c_4 \assign \min \left( 1 -
\beta - \alpha c_3^2  \frac{1 + C_I^2}{2 c_1} - \alpha c_2 - \frac{\alpha
	c_1}{2}, \alpha \frac{c_1}{2} \right)$$
is positive (note that $\beta <  1$). Recall also that
\[ \|v_h \|_h \leq (1 + \alpha c_3) \, \|u_h \|_h \; , \]
so that
\[ \frac{a^k_h (u_h, v_h)}{\|v_h \|_h} \geq \frac{c_4}{1 + \alpha c_3} \, \|u_h \|_h \; . \]
\end{proof}

%{\tmstrong{Remark}} {\tmem{We can make a remark here saying that Assumption~\ref{assu:2}
%		is not necessary if the shift is in the normal direction to $\tG$ and $k
%		= 1$. And discuss that these are still not the good assumptions for
%		practice.}}

We are now in a position to prove an \textit{a priori} error estimate, and for this purpose we require:
\begin{assumption}
	\label{assu:forInterpolation}
There is an $h$-independent constant $c > 0$, such that, for any measurable $\ti{\omega} \subset \tG$, one has that $\mathrm{meas}_{d-1} (\ti{\omega}) \leqslant c \, \mathrm{meas}_{d-1} (\bs{M}_{h} (\ti{\omega}))$ where $\mathrm{meas}_{d-1}$ denotes the $(d - 1)$-dimensional surface measure on $\tG$ or $\Gamma$.
\end{assumption}
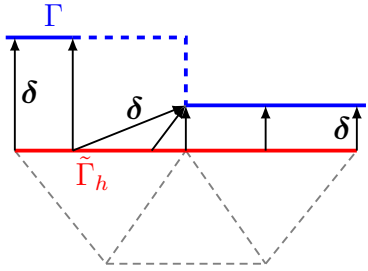
\begin{figure}[h!]
  \centering
    \begin{tikzpicture}[scale= 0.6]
      %%% second line of elements 
      \draw[line width = 0.25mm,densely dashed,gray] (0,2.5) -- (2,0);
      \draw[line width = 0.25mm,densely dashed,gray] (2,0) -- (3.75,2.5);
      \draw[line width = 0.25mm,densely dashed,gray] (3.75,2.5) -- (5.5,0);
      \draw[line width = 0.25mm,densely dashed,gray] (2,0) -- (5.5,0);
      \draw[line width = 0.25mm,densely dashed,gray] (5.5,0) -- (7.5,2.5);
      
      %%%% True boundary
      \draw [line width = 0.5mm,blue, name path=true] plot[sharp corners] coordinates {(-0.2,5)(1.27,5)};
      \draw [line width = 0.5mm,blue, name path=true] plot[sharp corners] coordinates {(3.75,3.5)(7.7,3.5)};
      \draw [line width = 0.5mm,blue, dashed, name path=true] plot[sharp corners] coordinates {(1.27,5)(3.75,5)(3.75,3.5)};
      %%%% Surrogate boundary
      \draw[line width = 0.5mm,red] (0,2.5) -- (7.5,2.5);
      %% labels
      \node[text width=0.5cm] at (1.75,2.) {\large${\color{red}\tG}$};
      \node[text width=0.5cm] at (1.05,5.5) {\large${\color{blue}\G}$};
      \node[text width=0.5cm] at (0.55,3.8) {\large$\bs{\delta}$};
      \node[text width=0.5cm] at (2.85,3.45) {\large$\bs{\delta}$};
      \node[text width=0.5cm] at (7.4,3.0) {\large$\bs{\delta}$};
      %% arrows
      \draw[-stealth,line width = 0.25mm,-latex] (3.75,2.5) -- (3.75,3.5);
      \draw[-stealth,line width = 0.25mm,-latex] (1.27,2.5) -- (3.75,3.5);
      \draw[-stealth,line width = 0.25mm,-latex] (3.,2.5) -- (3.75,3.5);
      \draw[-stealth,line width = 0.25mm,-latex] (0,2.5) -- (0,5);
      \draw[-stealth,line width = 0.25mm,-latex] (1.27,2.5) -- (1.27,5);
      \draw[-stealth,line width = 0.25mm,-latex] (7.5,2.5) -- (7.5,3.5);
      \draw[-stealth,line width = 0.25mm,-latex] (5.5,2.5) -- (5.5,3.5);
      
    \end{tikzpicture}
    \caption{An anomalous case avoided by Assumption~\ref{assu:forInterpolation}: the distance vector $\bs{\delta}(\ti{\bs{x}})$ has a jump as $\ti{\bs{x}}$ runs along $\bs{\G}$. The map $\bs{M}_h$ does not map the dashed portion of the boundary $\G$.}
    \label{fig:gammaJump}
\end{figure}
Essentially, the previous assumption requires that when a point $\ti{\bs{x}}$ runs over the surrogate boundary $\tG$ with unit ``speed,'' its image $\bs{x}=\bs{M}_{h}(\ti{\bs{x}})$ runs over $\Gamma$ with a speed bounded from below.
In practical terms, Assumption~\ref{assu:forInterpolation} wants to avoid situations like the ones depicted in Figure~\ref{fig:gammaJump}, in which part of the true boundary is not mapped by the surrogate boundary. 
% {\color{red} (in particular it never stops, although it may sometimes jump, see Fig. ~\ref{fig:gammaJump}}. 
In three dimensions, the previous assumption also implies that $\bs{M}_{h}$ cannot map perpendicular paths on $\tG$ onto almost parallel paths on $\Gamma$. 

Assumption~\ref{assu:forInterpolation} is however less stringent than the assumption made in~\cite{atallah2022high, atallah2021analysis}, which requires the local coordinate system induced by $\bs{M}_{h}$ on the whole strip $\Omega\setminus\tO$ to be well posed.
In contrast, Assumption \ref{assu:forInterpolation} only requires the non-singularity of $\bs{M}_{h}$ as the map from $\tG$ to $\Gamma$. 
In particular, the case of true domains with edges/corners/vertices is included in Assumption \ref{assu:forInterpolation}, since $\bs{M}_{h}$ will be bounded and continuous, possibly with discontinuous derivatives.

\begin{thm}\label{aprioriH1}
	Let $u_h \in V_h^k(\tO)$ be the solution of the discrete problem (\ref{eq:SB_Poisson_uns}) and $u \in H^{k + 1} (\Omega)$ be the solution of the infinite dimensional problem \eqref{eq:SteadyPoisson}.
	Under the assumptions above, one has the error estimate
	\begin{equation}
		\label{apriori} | u - u_h |_{1, \Omega_h} \leqslant C \left( \sum_{\ti{T} \in \ti{\cT}_h} h_{\ti{T}}^{2 k} | u |^2_{k + 1, {\ti{T}}} \right)^{1 / 2} \; ,
	\end{equation}
	with an $h$-independent constant $C > 0$.	
\end{thm}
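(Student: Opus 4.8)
The plan is the standard Strang-type argument built on the inf-sup stability of Lemma \ref{lemma:inf_sup} together with the \emph{exact consistency} (Galerkin orthogonality) of the formulation that the paper advertises. First I would establish consistency: the true solution $u$ of \eqref{eq:SteadyPoisson}, when inserted into $a_h^k(\cdot,\cdot)$, satisfies $a_h^k(u,w_h)=l_h(w_h)$ for all $w_h\in V_h^k(\tO)$. This is where the paper's reinterpretation of the shift operator matters: one must read $\tS^k u$ on $\tG$ not as a truncated Taylor polynomial with a neglected remainder, but (via the map $\bs{M}_h$) as $\tS^k u = \bar u_D = u\circ\bs{M}_h$ exactly, so the $\avg{\tS^k u_h,\nabla w_h\cdot\tn}_{\tG}$ term against $u$ reproduces exactly $\avg{\bar u_D,\nabla w_h\cdot\tn}_{\tG}$; integrating $-\Delta u = f$ by parts on $\tO$ against $w_h$ produces $(\nabla u,\nabla w_h)_{\tO}-\avg{\nabla u\cdot\tn, w_h}_{\tG}$, and adding the surrogate-boundary term gives precisely $l_h(w_h)$. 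Hence $a_h^k(u-u_h,w_h)=0$ for all $w_h\in V_h^k(\tO)$.

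Next, I would pick an interpolant $\ssf{I}_h u\in V_h^k(\tO)$ of the (extended) solution and split $u-u_h = (u-\ssf{I}_h u) + (\ssf{I}_h u - u_h)$. The discrete error $e_h := \ssf{I}_h u - u_h$ lies in $V_h^k(\tO)$, so Lemma \ref{lemma:inf_sup} gives $\gamma\|e_h\|_h \le \sup_{v_h}a_h^k(e_h,v_h)/\|v_h\|_h$. Using Galerkin orthogonality, $a_h^k(e_h,v_h)=a_h^k(\ssf{I}_h u - u, v_h)$, and I would bound each of the three terms of the bilinear form applied to the interpolation error $\eta := \ssf{I}_h u - u$: the volume term $(\nabla\eta,\nabla v_h)_{\tO}\le |\eta|_{1,\tO}|v_h|_{1,\tO}$; the term $\avg{\nabla\eta\cdot\tn,v_h}_{\tG}\le \|h^{1/2}\nabla\eta\|_{0,\tG}\,\|h^{-1/2}v_h\|_{0,\tG}$, controlled by a scaled trace inequality on $\tO$; and the shift term $\avg{\tS^k\eta,\nabla v_h\cdot\tn}_{\tG}$, which after a trace-inverse inequality on $v_h$ ($\|\nabla v_h\cdot\tn\|_{0,\tG}\lesssim |h^{-1/2}v_h|_{1,\tO}$) reduces to estimating $\|h^{-1/2}\tS^k\eta\|_{0,\tG}$. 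This last quantity I would split as $\|h^{-1/2}\eta\|_{0,\tG} + \|h^{-1/2}(\tS^k\eta-\eta)\|_{0,\tG}$; the first piece is a scaled trace estimate, while the second involves the directional derivatives $\ssf{D}^i_{\bs{\delta}}\eta$ along $\bs{\delta}$, each factor of $\|\bs{\delta}\|$ bounded by $C_{\max}h$ via Assumption \ref{assu:1}, converting derivatives of $\eta$ of order $i$ into powers of $h$ that match the interpolation order.

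The heart of the estimate is thus a set of interpolation bounds on the strip near $\tG$ for $\eta$ and its derivatives up to order $k$, in the surrogate-boundary $L^2$-norm; here Assumption \ref{assu:forInterpolation} is what lets me pull $L^2(\tG)$-norms of $\eta$ back to $L^2(\Gamma)$ (or to element norms on $\tO$) without losing control, and standard scaled trace/interpolation inequalities on shape-regular elements deliver $\|h^{1/2}\ssf{D}^j\eta\|_{0,\ti e}\lesssim h_{\ti T}^{j+1/2}|u|_{j+1,\ti T}$-type bounds, summing to the right-hand side of \eqref{apriori}. Assembling: $\|e_h\|_h \lesssim (\sum_{\ti T}h_{\ti T}^{2k}|u|_{k+1,\ti T}^2)^{1/2}$, and combining with the interpolation bound on $u-\ssf{I}_h u$ (and noting $|\cdot|_{1,\Omega_h}\le\|\cdot\|_h$) gives the claim.

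The main obstacle I anticipate is \emph{not} the abstract Strang machinery but the boundary interpolation estimates on the surrogate skeleton: one must control $\|h^{-1/2}(\tS^k\eta-\eta)\|_{0,\tG}$ uniformly, which requires that the interpolant $\ssf{I}_h u$ be chosen so that its high-order directional derivatives along $\bs{\delta}$ are themselves well-approximated — equivalently, one needs an interpolation operator (e.g. a Scott–Zhang-type or a suitably averaged nodal interpolant on the band of boundary elements, including the "one interior node" triangles flagged in the remark) whose error is controlled in $H^{k}$-seminorms on the elements touching $\tG$, and then Assumption \ref{assu:forInterpolation} to transfer these surface norms consistently. Handling the geometry of the strip $\Omega\setminus\tO$ — in particular that $\bs{\delta}$ may not be a graph and that corners of $\Gamma$ are allowed — is the delicate point, and is exactly what Assumption \ref{assu:forInterpolation} is designed to tame.
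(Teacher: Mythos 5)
Your overall skeleton (exact Galerkin orthogonality, inf-sup from Lemma~\ref{lemma:inf_sup}, insertion of an interpolant, term-by-term bound of $a_h^k(u-\ssf{I}^k_h u, v_h)$) is the same as the paper's, and your treatment of the volume term and of $\avg{\nabla\eta\cdot\tn, v_h}_{\tG}$ is fine. The gap is in the boundary shift term, where you mix two incompatible readings of $\tS^k$ on non-polynomial functions. Exact consistency, $a_h^k(u,w_h)=l_h(w_h)$, holds only if $\tS^k u$ is \emph{defined} as $u\circ\bs{M}_h$ (this is precisely the paper's reinterpretation~\eqref{akhMod}); you invoke this reading to get $a_h^k(u-u_h,w_h)=0$. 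But in the error bound you then expand $\tS^k\eta-\eta=\sum_{i=1}^k \ssf{D}^i_{\bs\delta}\eta/i!$ for $\eta=\ssf{I}^k_h u-u$, which is the truncated-Taylor reading: it is valid for the polynomial part $\ssf{I}^k_h u$ but not for $u$, where the two readings differ exactly by the remainder $\ssf{R}^k(u,\bs\delta)$. So either (i) you keep the composition reading everywhere, in which case the term you must estimate is $\avg{(u-\ssf{I}^k_h u)\circ\bs{M}_h,\nabla v_h\cdot\tn}_{\tG}$, and your ``scaled trace/interpolation inequalities on shape-regular elements'' do not apply because $\bs{M}_h(\ti e)$ lies outside $\ti T$ and $u$ is not a polynomial; or (ii) you keep the Taylor reading everywhere, in which case Galerkin orthogonality is not exact and you must additionally estimate $\avg{\ssf{R}^k(u,\bs\delta),\nabla v_h\cdot\tn}_{\tG}$ for $u\in H^{k+1}$ only, which is not an $o(\|\bs\delta\|^k)$ pointwise statement and needs an integral-remainder argument over the strip with its own hypotheses. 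In either variant the decisive estimate is missing from your plan.

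That missing estimate is exactly where the paper does its real work: for each boundary facet $\ti e$ it uses Assumption~\ref{assu:forInterpolation} to transfer $\|(u-\ssf{I}^k_h u)\circ\bs{M}_h\|_{0,\ti e}$ to $\|u-\ssf{E}_{\ti T}(\ssf{I}^k_h u)\|_{0,\G\cap S}$ on the true boundary, introduces the enlarged homothetic simplex $S\supset\ti T$ containing $\bs{M}_h(\ti e)$, applies the multiplicative trace inequality $\|v\|^2_{0,\G\cap S}\lesssim\|v\|_{0,S}\|v\|_{1,S}$, and controls both factors via an $H^{k+1}$-Sobolev extension $\ssf{E}u$ of $u$ to $\mathbb{R}^d$ together with interpolation on $S$ and norm equivalence between $\ti T$ and $S$, arriving at $\|(u-\ssf{I}^k_h u)\circ\bs{M}_h\|^2_{0,\ti e}\lesssim h_{\ti e}^{2k-1}|\ssf{E}u|^2_{k+1,S}$. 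Note also that Assumption~\ref{assu:forInterpolation} is used for this change of surface measure from $\tG$ to $\G$, not (as you suggest) to pull $L^2(\tG)$-norms of $\eta$ back onto element norms in $\tO$. Without this block (or an equivalent estimate of the Taylor remainder on $\tG$ in variant (ii)), your argument does not close.
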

\begin{proof}
	Recall that the Taylor expansion of order $k$ is exact on polynomials of
	degree $k$, so that the bilinear form
	(\ref{eq:UnsymmetricShiftedNitscheBilinearForm}) can be rewritten on $V_h^k(\tO)$
	as
	\begin{equation}
		\label{akhMod} a^k_h  (u_h \hspace{0.17em}, \hspace{0.17em} w_h) =
		(\hspace{0.17em} \nabla u_h \hspace{0.17em}, \hspace{0.17em} \nabla w_h
		\hspace{0.17em})_{\tO} - \avg{\nabla u_h \cdot \ti{\bs{n}}, \ w_h}_{\tG} + \avg{u_h \circ \bs{M}_{h},  \nabla w_h \cdot \ti{\bs{n}}}_{\tG} \; .
	\end{equation}
	The term $u_h \circ \bs{M}_{h}$ should be interpreted here on every
	boundary facet $\ti{e} \in \tG$ as
	\[ (u_h \circ \bs{M}_{h}) |_{\ti{e}} \nobracket = (\ssf{E}_{\ti{T}} (u_h) \circ
	\bs{M}_{h}) |_{\ti{e}} \nobracket  \; , \]
	where $\ti{T} \in \ti{\cT}_h$ is the mesh element having ${\ti{e}}$ as one of its	sides, and $\ssf{E}_{\ti{T}}$ denotes the extension of a polynomial on ${\ti{T}}$ by the same polynomial viewed as function on $\mathbb{R}^d$.

Observe now that multiplying the governing equation in problem~\eqref{eq:SteadyPoisson} by $w_h \in V_h^k(\tO)$ and integrating by parts leads to 
\begin{align}
\label{eq:Galerkin!}
a^k_h  (u  \hspace{0.17em}, \hspace{0.17em} w_h) = l_h (w_h) \; ,
\end{align}
where the bilinear form $a_h^k$ should be understood in the sense of~\eqref{akhMod}. 
\begin{rem}
There is an important point to be made here. The interpretation of the Taylor expansion of a polynomial function at $\ti{\bs{x}} \in \tG$ being equivalent to the evaluation of the same polynomial at $\bs{x} = \bs{M}_{h}(\ti{\bs{x}}) \in \G$ is extended in~\eqref{akhMod} to any (non-polynomial) function.
Under this premise, {\it the Shifted Boundary Method satisfies a Galerkin-orthogonality statement}~\eqref{eq:Galerkin!}, which is something that was not realized in the literature of the method up to this point.
\end{rem}

Subtracting~\eqref{eq:Galerkin!} from ~\eqref{akhMod} yields the familiar Galerkin orthogonality relation
	\[ a^k_h  (u_h - u \hspace{0.17em}, \hspace{0.17em} w_h) = 0 \; . \]
Adding and subtracting $\ssf{I}^k_h u$, the interpolate of $u$, yields 
	\[ a^k_h  (u_h - \ssf{I}^k_h u \hspace{0.17em}, \hspace{0.17em} w_h) = a^k_h(u  - \ssf{I}^k_h u \hspace{0.17em}, \hspace{0.17em} w_h) \; . \]
Using the {inf-sup} estimate of Lemma~\ref{lemma:inf_sup} and the definition of the bilinear form $a_h^k$ gives
	\begin{align}\label{infsupcor}
		\gamma \| u_h - \ssf{I}^k_h u \|_h 
		&\leqslant\; 
		\sup_{w_h \in V_h^k} 	\frac{a^k_h  (u  - \ssf{I}^k_h u \hspace{0.17em}, \hspace{0.17em} w_h)}{\|	w_h \|_h} 
		\nonumber \\
		&\leqslant\; 
		| u  - \ssf{I}^k_h u |_{1, \tilde{\Omega}_h} + \| h^{1/2} \nabla (u  - \ssf{I}^k_h u) \cdot \tilde{\bs{n}} \|_{0,\tG} 
		\nonumber \\
		&\phantom{\leqslant}\; 
		\quad + \sup_{w_h \in V_h^k} \frac{\left\langle (u-\ssf{I}^k_h u) \circ \bs{M}_{h} \hspace{0.17em}, \hspace{0.17em} \nabla w_h \cdot \tilde{\bs{n}} \right\rangle_{\tG}}{\| w_h	\|_h} 
		\; .
	\end{align}
The first two terms on the right-hand side of the inequality above are	bound by $ch^k | u |_{k + 1}$ thanks to the usual interpolation estimates.
The last term requires some more work and can be treated element by element, for any $w_h \in V_h^k(\tO)$, applying Cauchy's inequality and equivalence of discrete norms:
	\begin{align*}
		\frac{\left\langle (u  - \ssf{I}^k_h u) \circ \bs{M}_{h} \hspace{0.17em}, \hspace{0.17em} \nabla w_h \cdot \tilde{\bs{n}} \right\rangle_{\tG}}{\| w_h \|_h} 
		&\leqslant\; 
		\frac{\displaystyle \left( \sum_{\ti{e} \in \tG} {h^{- 1}_{\ti{e}}}  \| (u  - \ssf{I}^k_h u) \circ \bs{M}_{h} \|_{0, \ti{e}}^2 \right)^{1 / 2} 	\left( \sum_{\ti{e} \in \tG} {h _{\ti{e}}}  \|	\nabla w_h \cdot \tilde{\bs{n}} \|_{0, \ti{e}}^2 \right)^{1 / 2}}
		{\displaystyle \left(\sum_{\ti{T} \in \ti{\cT}_h} \| \nabla w_h \|_{0, {\ti{T}}}^2 \right)^{1 / 2}}
		\nonumber \\
		&\phantom{\leqslant}\; 
		\leqslant
		C \left( \sum_{\ti{e} \in \tG} h_{\ti{e}}^{-1}  \| (u  - \ssf{I}^k_h u) \circ \bs{M}_{h} \|_{0, \ti{e}}^2 \right)^{1 / 2} 
		\; ,
	\end{align*}
where the sum is taken over all the boundary facets $\ti{e} \in \tG$, each of which belongs to a mesh element ${\ti{T}}$, and we have used the trace inverse inequality $\| \nabla w_h \cdot \tilde{\bs{n}} \|_{0, \ti{e}} \leqslant \, C \, h_{\ti{e}}^{-1/2} \| \nabla w_h \|_{0, {\ti{T}}}^2$. 
Moreover, for every $\ti{e} \subset \tG \cap \partial {\ti{T}}$ with $\ti{T} \in \ti{\cT}_h$, let us denote by $S \supset {\ti{T}}$ the simplex obtained from ${\ti{T}}$ by a homothetic transformation with center $\ti{\ssb{C}} \in \ti{T}$ and scaling coefficient $\kappa > 1$ such that $\bs{M}_{h}(\ti{e}) \subset \G \cap S$ (see Fig.~\ref{fig:SimplexS}).
Observe that $\kappa$ is independent of $h$ and in general not too large, that is $\kappa=2$ is a possible choice in view of Assumption~\ref{assu:1}.

\begin{figure}[t]
  \centering
  \begin{tikzpicture}[scale=0.75]
    \draw[line width = 0.25mm,densely dashed,gray] (2,3.) -- (3.5,1.);
    \draw[line width = 0.25mm,densely dashed,gray] (3.5,1.) -- (5.,3.);

    %simplex
    \draw [line width = 0.5mm,violet, name path=true] plot[sharp corners] coordinates {(0,5)(7,5)(3.5,0)(0,5)};
    
    %%%% True boundary
    \draw [line width = 0.5mm,blue, name path=true] plot[smooth] coordinates {(0,3.5)(2,4)(5,4)(7.5,4.5)};
    \draw [line width = 1.0mm,teal, name path=true] plot[smooth] coordinates {(2,4)(5,4)};
    %%%% Surrogate boundary
    \draw [line width = 0.5mm,red, name path=true] plot[sharp corners] coordinates {(0,1.75)(2,3)(5,3)(7.5,3.5)};
    
    %% labels
    \node[text width=0.5cm] at (1.75,2.) {\large${\color{violet}S}$};
    \node[text width=0.5cm] at (3.2,4.4) {\large${\color{teal}\bs{M}_h(\ti{e})}$};
    \node[text width=0.5cm] at (3.75,2.0) {\large$\ti{T}$};
    \node[text width=0.5cm] at (3.7,2.7) {\large${\color{red} \ti{e}}$};
    \node[text width=0.5cm] at (2.5,3.5) {\large$\bs{\delta}$};
    \node[text width=0.5cm] at (6.5,2.75) {\large${\color{red}{\tG}}$};
    %% arrows
    \draw[-stealth,line width = 0.25mm,-latex] (2,3) -- (2,4);
    \draw[-stealth,line width = 0.25mm,-latex] (3.,3) -- (3,4);
    \draw[-stealth,line width = 0.25mm,-latex] (4.,3) -- (4,4);
    \draw[-stealth,line width = 0.25mm,-latex] (5,3) -- (5,4);
    
  \end{tikzpicture}
  \caption{A boundary element $\tilde{T} \in  \ti{\cT}_h$ and the corresponding simplex $\bs{S}$.}
  \label{fig:SimplexS}
\end{figure}

Then, by Assumption~\ref{assu:forInterpolation} and by the trace inequality, we have
	\[ \| (u  - \ssf{I}^k_h u) \circ \bs{M}_{h} \|_{0, \ti{e}}^2 
	\lesssim
	\|u  - \ssf{I}^k_h u \|_{0, \bs{M}_{h} (\ti{e})}^2 
	\lesssim
	\| u  - {\ssf E}_{\ti{T}} (\ssf{I}^k_h u) \|_{0,\G \cap S}^2 
	\leqslant 
	C \| {\ssf E} u  - {\ssf E}_{\ti{T}} (\ssf{I}^k_h u) \|_{0, S} \| {\ssf E} u  - {\ssf E}_{\ti{T}} (\ssf{I}^k_h u) \|_{1, S} \; , \]
where ${\ssf E} u \in H^{k + 1} (\mathbb{R}^d)$ is the $H^{k + 1}$-extension of $u$ from $\Omega$ to the whole space $\mathbb{R}^d$ (cf.~\cite{Adams03}). 
Taking $l = 0$ or $1$, observe that
	\[ \| {\ssf E} u - {\ssf E}_{\ti{T}} (\ssf{I}^k_h u) \|_{l, S} 
	\leqslant 
	\| {\ssf E} u - \ssf{I}^k_S {\ssf E} u \|_{l, S} + \| \ssf{I}^k_S {\ssf E} u - {\ssf E}_{\ti{T}} (\ssf{I}^k_h u) \|_{l, S} \; , \]
where $\ssf{I}^k_S$ stands for the interpolation on the simplex $S$. By standard interpolation bounds, the properties of smooth extensions, and the equivalence of norms on ${\ti{T}}$ and $S$,
	\[ \|\ssf{I}^k_S {\ssf E} u - {\ssf E}_{\ti{T}} (\ssf{I}^k_h u) \|_{l, S} 
	\leqslant 
	C \| \ssf{I}^k_S {\ssf E} u - \ssf{I}^k_h u \|_{l, {\ti{T}}} 
	\leqslant 
	C (\| {\ssf E} u -\ssf{I}^k_S {\ssf E} u
	\|_{l, S} + \| u - \ssf{I}^k_h u \|_{l, {\ti{T}}}) 
	\leqslant 
	Ch_{\ti{e}}^{k + 1 - l} | {\ssf E} u |_{k + 1, S} 
	\; , \]
	hence
	\[ \| {\ssf E} u - {\ssf E}_{\ti{T}} (\ssf{I}^k_h u) \|_{l, S}
	\leqslant 
	C \, h_{\ti{e}}^{k + 1 - l} |
	{\ssf E} u |_{k + 1, S} \]
	and
	\[ \| (u  - \ssf{I}^k_h u) \circ \bs{M}_{h} \|_{0, \ti{e}}^2 
	\leqslant
	C \, h_{\ti{e}}^{2 k - 1} | {\ssf E} u |_{k + 1, S} \; .
	\]
	Summing over all the boundary facets gives
	\begin{align}
	\label{eq:last_sum}
	\sum_{\ti{e} \in\tG} h_{\ti{e}}  \| (u  - \ssf{I}^k_h u) \circ \bs{M}_{h} \|_{0, \ti{e}}^2 
	\leqslant 
	C \sum_{\ti{e} \in\tG} h_{\ti{e}}^{2 k} | {\ssf E} u |_{k+ 1, S} 
	\; .
	\end{align}
	Since the extension ${\ssf E} u$ can be constructed locally to respect the regularity bound $|{\ssf E} u|_{k+ 1, S}\leqslant C| u |_{k+1, S\cap\Omega}$, the sum in~\eqref{eq:last_sum} can be rewritten as the right hand side of (\ref{apriori}). 
	Combining this final result with (\ref{infsupcor}) concludes the proof.
\end{proof}

$L^2$-error estimates are derived next, using an Aubin-Nitsche duality argument.
The derivations have analogies with the proofs by Burman~\cite{burman12} in the context of body-fitted grids, and yield an identical theoretical estimate of the convergence rate, suboptimal by a half an order of accuracy.
This result, however, does not match numerical experience, since both the SBM and the penalty-free method described in~\cite{burman12} show optimal convergence in practical computations.

\begin{thm}\label{aprioriL2}
	Suppose that the assumptions of Theorem~\ref{aprioriH1} hold. 
	In addition, assume that the mesh $\mathcal{T}_h$ is quasi-uniform of meshsize $\mathfrak{h}$ (i.e. $h_T \sim \mathfrak{h}$, with $\mathfrak{h}$ fixed, for all $T \in \mathcal{T}_h$) and that $\partial \Omega$ is of class $\mathcal{C}^2$.
	Then, the following $L^2$-error bound holds:
	\[ \| u - u_h \|_{0, \tO} \leqslant C \, \mathfrak{h}^{k + 1/2} | u |_{k + 1,	\Omega} \; . \]
\end{thm}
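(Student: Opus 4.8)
The plan is to run a standard Aubin--Nitsche duality argument, adapted to the surrogate-domain setting, mirroring the structure of Burman's proof in \cite{burman12}. Let $e = u - u_h$ and consider the dual problem
\[
-\Delta \varphi = \psi \quad \text{in } \Omega \; , \qquad \varphi = 0 \quad \text{on } \G \; ,
\]
where the right-hand side $\psi$ will be chosen to be (an extension of) $e$ restricted to $\tO$. Since $\partial\Omega$ is $\mathcal{C}^2$, elliptic regularity gives $\varphi \in H^2(\Omega)$ with $\|\varphi\|_{2,\Omega} \leqslant C \|\psi\|_{0,\Omega} = C\|e\|_{0,\tO}$. First I would test the continuous (surrogate) weak form against $\varphi$ and against its interpolant $\ssf{I}_h^k\varphi \in V_h^k(\tO)$; using the modified bilinear form \eqref{akhMod}, the exact Galerkin orthogonality $a_h^k(e, w_h) = 0$ for all $w_h \in V_h^k(\tO)$ derived in the proof of Theorem~\ref{aprioriH1}, and the (adjoint-consistent or adjoint-inconsistent, as here) bilinear form identity, I would arrive at a representation of $\|e\|_{0,\tO}^2$ as a sum of terms, each bilinear in $e$ and in $\varphi - \ssf{I}_h^k\varphi$, plus the extra terms stemming from the fact that $a_h^k$ is not symmetric and that $\varphi$ does not vanish on $\tG$ (only on $\G$).

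The key intermediate step is to estimate each of these terms. The volume term $(\nabla e, \nabla(\varphi - \ssf{I}_h^k\varphi))_{\tO}$ is handled by Cauchy--Schwarz, the $H^1$-estimate of Theorem~\ref{aprioriH1}, namely $|e|_{1,\tO} \leqslant C\mathfrak{h}^k|u|_{k+1,\Omega}$, and the standard interpolation bound $|\varphi - \ssf{I}_h^k\varphi|_{1,\tO} \leqslant C\mathfrak{h}\,|\varphi|_{2,\Omega} \leqslant C\mathfrak{h}\,\|e\|_{0,\tO}$, giving a contribution of order $\mathfrak{h}^{k+1}\|e\|_{0,\tO}|u|_{k+1,\Omega}$. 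The boundary terms of the form $\langle \nabla(\varphi - \ssf{I}_h^k\varphi)\cdot\tn, \tS^k e \rangle_{\tG}$ and $\langle \nabla e \cdot \tn, (\varphi - \ssf{I}_h^k\varphi)\circ\bs{M}_h \rangle_{\tG}$ (and those from the non-vanishing of $\varphi$ on $\tG$, handled via $\varphi = 0$ on $\G$ so that $\varphi\circ\bs{M}_h^{-1}$-type quantities on $\tG$ are $O(\mathfrak{h})$ by a Taylor/trace argument) are treated with trace and trace-inverse inequalities, Assumption~\ref{assu:forInterpolation}, and the homothetic-simplex device already used in the proof of Theorem~\ref{aprioriH1}. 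Here one uses $\|h^{1/2}\nabla(\varphi - \ssf{I}_h^k\varphi)\cdot\tn\|_{0,\tG} \leqslant C\mathfrak{h}\|\varphi\|_{2,\Omega}$ and, for the piece involving $e$, the $H^1$-bound on $e$ together with $\|h^{-1/2}(\varphi - \ssf{I}_h^k\varphi)\circ\bs{M}_h\|_{0,\tG}$ estimated by the simplex argument — this is exactly where the loss of half an order arises: one cannot do better than $\|h^{-1/2} e\|_{0,\tG} \leqslant C\mathfrak{h}^{k - 1/2}|u|_{k+1}$-type scaling on the surrogate boundary, so the best one extracts from the cross term is $\mathfrak{h}^{k+1/2}$, not $\mathfrak{h}^{k+1}$. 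Collecting all contributions, dividing by $\|e\|_{0,\tO}$, and invoking quasi-uniformity to replace $h_{\ti T}$ by $\mathfrak{h}$ throughout yields the claimed bound.

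The main obstacle I anticipate is the careful bookkeeping of the boundary terms created by the asymmetry of $a_h^k$ and by the fact that neither $\varphi$ nor the test function $\ssf{I}_h^k\varphi$ vanishes on the surrogate boundary $\tG$ (they vanish on $\G$ instead). One must exploit $\varphi|_{\G} = 0$ to show that $\varphi\circ\bs{M}_h$, and hence $\tS^k\varphi$ interpreted via $\bs{M}_h$, is controlled on $\tG$ by $\|\bs{\delta}\|\,\|\nabla\varphi\|$-type quantities, i.e. is $O(\mathfrak{h})$ smaller than a generic trace, using Assumption~\ref{assu:1} and a Taylor expansion of $\varphi$ in the strip (which is legitimate since $\varphi \in H^2 \hookrightarrow C^0$ and has enough regularity near $\G$). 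Managing the extension operators $\ssf{E}_{\ti T}$ and $\ssf{E}$ consistently between the primal and dual sides, and making sure the half-order loss is genuinely localized to one identifiable cross term (so that the statement cannot be accidentally improved within this argument), is the delicate part; the rest is the routine combination of interpolation estimates, trace/inverse inequalities, and the $H^1$ result already in hand.
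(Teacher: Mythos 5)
Your route is the same as the paper's: an Aubin--Nitsche duality argument with a dual solution in $H^2(\Omega)$, exact Galerkin orthogonality through the reinterpreted form \eqref{akhMod}, subtraction of an interpolant of the dual solution, and a splitting into interpolation-error terms plus the extra boundary terms caused by the asymmetry of $a^k_h$ and by the fact that the dual solution vanishes on $\G$ but not on $\tG$. However, you mislocate the half-order loss. The cross term you single out, involving $\|h^{-1/2}(\varphi-\ssf{I}^k_h\varphi)\circ\bs{M}_{h}\|_{0,\tG}$, is in fact optimal: for $\varphi\in H^2(\Omega)$ one has $\|h^{-1/2}(\varphi-\ssf{I}_h\varphi)\|_{0,\tG}\lesssim \mathfrak{h}\,\|\varphi\|_{2,\Omega}$, and paired with $\|h^{1/2}\nabla e\cdot\tn\|_{0,\tG}\lesssim \mathfrak{h}^{k}|u|_{k+1,\Omega}$ it contributes $\mathfrak{h}^{k+1}$ (these are the paper's Terms II--III). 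The irreducible $\mathfrak{h}^{k+1/2}$ comes from the \emph{non-interpolated} extra terms: $\langle \tS^k e + e,\ \nabla w\cdot\tn\rangle_{\tG}$, where $\|e\|_{0,\tG}+\|\tS^k e\|_{0,\tG}\lesssim \mathfrak{h}^{1/2}\|e\|_h\lesssim \mathfrak{h}^{k+1/2}|u|_{k+1,\Omega}$ but $\|\nabla w\cdot\tn\|_{0,\tG}$ can only be bounded by $\|w\|_{2,\Omega}$ via the trace theorem, with no extra power of $\mathfrak{h}$ available; and $\langle\nabla e\cdot\tn,\ w\rangle_{\tG}$, which is $\mathfrak{h}^{k-1/2}\cdot\mathfrak{h}$. (Also note $\|h^{-1/2}e\|_{0,\tG}\leqslant\|e\|_h\lesssim \mathfrak{h}^{k}|u|_{k+1,\Omega}$, not $\mathfrak{h}^{k-1/2}$ as you wrote.)

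The second point needing repair is your treatment of the dual solution on $\tG$: the $O(\mathfrak{h})$ smallness of $w$ on the surrogate boundary cannot be justified by a pointwise Taylor expansion ``since $\varphi\in H^2\hookrightarrow C^0$'' --- continuity gives no control of $\nabla w$ along the segment joining $\tG$ to $\G$, and $H^2$ does not embed in $C^1$ in $d=2,3$. The paper instead uses a Hardy-type inequality, $\|w/\phi\|_{1,\Omega}\lesssim\|w\|_{2,\Omega}$ with $\phi$ the distance to $\G$ (quoted from the $\phi$-FEM literature), combined with Assumption~\ref{assu:1} and the trace theorem to get $\|w\|_{0,\tG}\lesssim \mathfrak{h}\,\|w\|_{2,\Omega}$, which is what makes the term $\langle\nabla e\cdot\tn, w\rangle_{\tG}$ come out at $\mathfrak{h}^{k+1/2}$. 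With these two corrections your plan coincides with the paper's proof; the remaining bookkeeping is as you describe.
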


\begin{proof}

	Introduce $w \in H^2 (\Omega)$, the solution to
	\begin{align*}
		- \Delta w & = \; (u - u_h) \mathbbm{1}_{\tO} \quad \; \text{ on } \Omega \; , \\
		w & = \;  0 \qquad \qquad \qquad \text{ on } \partial \Omega \; ,
	\end{align*}
	where $\mathbbm{1}_{\tO}$ is the indicator function of the set $\tO$.
	Then, we have $\| w \|_{2, \Omega} \leqslant C \| u - u_h \|_{0, \tO}$ and, integrating by parts,
	\begin{multline*} \| u - u_h \|_{0, \tO}^2 = (u - u_h, - \Delta w )_{\tO} =
		- \langle u - u_h, \nabla w  \cdot \ti{\bs{n}} \rangle_{\tG} + (\nabla (u -
		u_h), \nabla w)_{\tO} \\
		= a^k_h (u - u_h, w) - \langle u - u_h, \nabla w  \cdot \ti{\bs{n}}
		\rangle_{\tG} - \langle \tS^k (u - u_h), \nabla w  \cdot \ti{\bs{n}}
		\rangle_{\tG} + \langle \nabla (u - u_h) \cdot \ti{\bs{n}}, w 
		\rangle_{\tG} 
		\; .
	\end{multline*}
	Thanks to the Galerkin orthogonality property (\ref{eq:Galerkin!}),
	\begin{align} \label{5terms}
		\| u - u_h \|_{0, \tO}^2 
		&=\;
		 a^k_h (u - u_h, w - \ssf{I}^1_h w) 
		- \langle u	- u_h + \langle \tS^k (u -	u_h), \nabla w  \cdot \ti{\bs{n}} \rangle_{\tG} 
		+ \langle \nabla (u - u_h)
		\cdot \ti{\bs{n}}, w  \rangle_{\tG} 
		\nonumber \\
		& \leqslant \; 
		\underbrace{| u - u_h |_{1, \tO} | w - \ssf{I}^1_h w |_{1, \tO}}_{I} 
		+
		\underbrace{\left\| \frac{1}{\sqrt{h}} \tS^k (u - u_h) \right\|_{0, \tG}
			\left\| \sqrt{h} \nabla (w - \ssf{I}^1_h w) \cdot \ti{\bs{n}} \right\|_{0, \tG}}_{II}
		\nonumber \\
		& \phantom{\leqslant} \; 
		+
		\underbrace{\left\| \sqrt{h} \nabla (u - u_h) \cdot \ti{\bs{n}} \right\|_{0, \tG} \left\|
			\frac{1}{\sqrt{h}} (w - \ssf{I}^1_h w) \right\|_{0, \tG}}_{III} 
		\nonumber \\
		& \phantom{\leqslant} \; 
		+
		\underbrace{(\| u - u_h \|_{0, \tG} + \| \tS^k (u - u_h) \|_{0,\tG}) \| \nabla w \cdot \ti{\bs{n}} \|_{0, \tG}}_{IV}
%		\nonumber \\
%		& \phantom{\leqslant} \; 
		+
		\underbrace{\| \nabla (u - u_h) \cdot \ti{\bs{n}} \|_{0, \tG} \| w \|_{0, \tG}}_{V} 
		\; .
	\end{align}
	We recall that the proof of Theorem~\ref{aprioriH1} gives $\| u - u_h \|_h
	\leqslant C \, \mathfrak{h}^k  \, | \, u |_{k + 1, \Omega}$. This enables us to obtain optimal bounds for the first three terms above:
	\begin{description}
		\item[Term I:] Recalling that $| u - u_h |_{1, \tO} \leqslant \| u - u_h \|_h$ and using standard interpolation estimates, we have
		\[ | u - u_h |_{1, \tO} | w - \ssf{I}^1_h w |_{1, \tO} \leqslant C \, \mathfrak{h}^k \, | u |_{k + 1, \Omega} \,  | w |_{2, \tO} \; . \]
		
		\item[Term II:] By definition of $\tS^k$,
		\begin{align*}
			\left\| \frac{1}{\sqrt{h}} \tS^k (u - u_h) \right\|_{0, \tG}
			& \leqslant \;
			\left\| \frac{1}{\sqrt{h}} (u - u_h) \right\|_{0, \tG} +
			\left\| \frac{1}{\sqrt{h}} \sum_{i = 1}^k \frac{\ssf{D}^i_{\bs{\delta}}
				\hspace{0.17em} (u - u_h)}{i!} \right\|_{0, \tG} 
			\nonumber \\
			& \leqslant \;
			\| u - u_h \|_h + C \, \| h^{1/2} \, \nabla (u - u_h) \|_{0, \tG}
			\; .
			%\leqslant C \| u - u_h \|_h 
		\end{align*}	
		The last bound is obtained using Assumption~\ref{assu:1} and combining Theorem~\ref{aprioriH1} with discrete trace inequalities, inverse inequalities, and interpolation estimates.  
		Indeed, for any $i$ between 1 and $k$,
		\begin{align*}	 
			\left\| \frac{1}{\sqrt{h}} \frac{\ssf{D}^i_{\bs{\delta}} \hspace{0.17em}
				(u - u_h)}{i!} \right\|_{0, \tG} 
			& \leqslant \;
			C \, \| h^{i - 1/2} \, \nabla^i (u - u_h) \|_{0, \tG} 
			\nonumber \\ 
			& \leqslant \;  
			C \,
			\left(
			\| h^{i - 1/2} \, \nabla^i (u - \ssf{I}^i_h u) \|_{0, \tG} + 
			\| h^{i - 1/2} \, \nabla^i (\ssf{I}^i_h u - u_h) \|_{0, \tG}
			\right) 
			\nonumber \\ 
			& \leqslant \;  
			C \, 
			\left( \mathfrak{h}^k \, | u |_{k + 1, \Omega} 
			+ \| h^{-1/2} \, (\ssf{I}^i_h u - u_h) \|_{0, \tG}
			\right) 
			\nonumber \\ 
			& \leqslant \;  
			C \, \mathfrak{h}^k \, | u |_{k + 1, \Omega} 
			\; ,
		\end{align*}	
		where $\nabla^i$ is the $i$th-order gradient.  
		Hence,
		\[ \left\| \frac{1}{\sqrt{h}} \; \tS^k (u - u_h) \right\|_{0, \tG}
		\leqslant 
		C \left( \mathfrak{h}^k \, | u |_{k + 1, \Omega} 
		+ \| h^{-1/2} (u - u_h) \|_{0, \tG} \right) 
		\leqslant 
		C \, \mathfrak{h}^k \, | u |_{k + 1, \Omega}
		\]
		and
		\[ \left\| \frac{1}{\sqrt{h}} \; \tS^k (u - u_h) \right\|_{0, \tG}
		\left\| \sqrt{h} \; \nabla (w - \ssf{I}^1_h w) \cdot \ti{\bs{n}} \right\|_{0, \tG}
		\leqslant C \;  \mathfrak{h}^{k+1} \; | u |_{k + 1, \Omega} \; | w |_{2, \tO} \; . \]
		
		\item[Term III:] Because this term is similar to Term II, it can be bounded in the same	manner.
	\end{description}

\noindent 	The remaining two terms in (\ref{5terms}) are responsible for the loss of $\sqrt{h}$ in our error estimate.
	
	\begin{description}
		\item[Term IV:]  Again, by inverse estimates (similar to Term II),
		\begin{align*}
		\| u - u_h \|_{0, \tG} + \| \tS^k (u - u_h) \|_{0, \tG}
		& \leqslant \;  
		2 \| u - u_h \|_{0, \tG} + \left\| \sum_{i = 1}^k \frac{\ssf{D}^i_{\bs{\delta}} \hspace{0.17em} (u - u_h)}{i!} \right\|_{0,\tG} 
		\nonumber \\ 
		& \leqslant \;  
		2 \, \mathfrak{h}^{1/2} \, \| u - u_h \|_h + C \, \mathfrak{h}^{k+1/2} \,  | u |_{k + 1, \Omega}  )
		\; .
		\end{align*}
		This gives already the error of order $\mathfrak{h}^{k + \frac 12}$. 
		Unfortunately, one cannot	gain another $\mathfrak{h}^{1/2}$, since the norm $\| \nabla w \cdot \ti{\bs{n}} \|_{0, \tG}$ can only be bounded by $\| \nabla w \|_{1, \tO}$, using the trace theorem.
		Thus,
		\[ (\| u - u_h \|_{0, \tG} + \| \tS^k (u - u_h) \|_{0,\tG}) \, \| \nabla w \cdot \ti{\bs{n}} \|_{0, \tG} 
		\leqslant C \, \mathfrak{h}^{k+1/2} \, | u |_{k + 1, \Omega} \, \| w \|_{2, \tO} \; . \]
		\item[Term V:] By calculations similar to those above, we have
		\[ \| \nabla (u - u_h) \cdot \ti{\bs{n}} \|_{0, \tG} 
		\leqslant C \, \mathfrak{h}^{k-1/2} \, | u |_{k + 1, \Omega} \; . \]
		To bound $\| w \|_{0, \tG}$, we introduce the distance function $\phi$ on
		$\Omega$, i.e. $\phi (x) = \tmop{dist} (x, \Gamma)$ for any $x \in \Omega$ and
		recall the Hardy-type inequality from \cite{duprez2020phi} (recall that $w$ vanishes on
		$\Gamma$)
		\[ \left\| \frac{w}{\phi} \right\|_{1, \Omega} \leqslant C \,  \| w \|_{2, \Omega} \; .
		\]
		By Assumption~\ref{assu:1} and the trace theorem, we obtain
		\[ 
		\| w \|_{0, \tG} 
		\leqslant \max_{\ti{\bs{x}} \in \tG} | \phi (x) |  \left\| \frac{w}{\phi} \right\|_{0, \tG} 
		\leqslant C \, \mathfrak{h}  \, \left\| \frac{w}{\phi} \right\|_{1, \tO} 
		\leqslant C \, \mathfrak{h} \, \| w \|_{2, \Omega} 
		\; .
		\]
		Hence
		\[ \| \nabla (u - u_h) \cdot \ti{\bs{n}} \|_{0, \tG} \| w \|_{0, \tG} 
		\leqslant 
		C \, \mathfrak{h}^{k+1/2} \, | u |_{k + 1, \Omega} \, \| w \|_{2, \tO} \; . \]
	\end{description}
	Assembling the bounds for all the terms in (\ref{5terms}) yields
	\[ \| u - u_h \|_{0, \tO}^2 
	\leqslant C \; \mathfrak{h}^{k+1/2} \,  | u |_{k + 1,\Omega} \, \| w \|_{2, \tO} 
	\leqslant C \; \mathfrak{h}^{k+1/2} \,  | u |_{k + 1,\Omega} \, \| u - u_h \|_{0, \tO} \; , \]
	which concludes the proof.
\end{proof}

\subsection{On condition numbers}
\label{sec:cond_numbers}
it is easy to see that the condition number of the matrix associated to the bilinear form $a_h^k$ in~\eqref{eq:UnsymmetricShiftedNitscheBilinearForm} scales like $\mathfrak{h}^{-2}$ on a quasi-uniform mesh of size $\mathfrak{h}$, similar to the case of a standard FEM on a fitted mesh. 
More precisely, denoting by $\mathbf{A}$ the matrix representing $a_h^k$ in the standard basis of $V_h^{k}(\tO)$, the condition number $\kappa (\mathbf{A}) \assign \| \mathbf{A} \|_2 \| \mathbf{A}^{- 1} \|_2$ satisfies
\begin{equation}\label{TheorCondNum}
 \kappa (\mathbf{A}) \lesssim \mathfrak{h}^{- 2} \; . 
\end{equation}
Here, $\| \cdot \|_2$ stands for the matrix (operator) norm associated to the vector $2$-norm (or, Euclidean norm) $| \cdot |_2$, namely:
\begin{align*}
	\| \mathbf{A} \|_2 
	= \sup_{\mathbf{v} \in \mathbb{R}^N} \sup_{\mathbf{w} \in \mathbb{R}^N} \dfrac{\mathbf{Av} \cdot \mathbf{w}}{|	\mathbf{v} |_2 | \mathbf{w} |_2}  \; .
\end{align*}

We give here a sketch of the proof. Associating any $v_h\in V_h^{k}(\tO)$ to a vector $\mathbf{v} \in \mathbb{R}^N$ (of $N$ degrees of freedom), we observe $\|v_h \|_{0, \tO} \sim
\mathfrak{h}^{d / 2} | \mathbf{v} |_2$ by the equivalence of norms on the quasi-uniform  mesh. The continuity of the form $a_h^k$ in the norm $\|\cdot\|_h$ leads to
\begin{align*}
	\| \mathbf{A} \|_2 
	%= \sup_{\mathbf{v} \in \mathbb{R}^N} \dfrac{|mathbf{Av} |_2}{| \mathbf{v}|_2} 
	&=\; \sup_{\mathbf{v} \in \mathbb{R}^N} \sup_{\mathbf{w} \in \mathbb{R}^N} \dfrac{\mathbf{Av} \cdot \mathbf{w}}{|	\mathbf{v} |_2 | \mathbf{w} |_2}  
%	= \sup_{\mathbf{v} \in \mathbb{R}^N}\sup_{\mathbf{w} \in \mathbb{R}^N} \dfrac{a_h^k (v_h, w_h)}{| \mathbf{v} |_2 | \mathbf{w} |_2}  
	\\
	&\lesssim \, \mathfrak{h}^d \sup_{v_h\in V_h^{k}(\tO)} \; \sup_{w_h\in V_h^{k}(\tO)} \;  \dfrac{a_h^k (v_h, w_h)}{\| v_h \|_{0, \Omega_h} \| w_h \|_{0, \Omega_h}} 
	\\
	&\lesssim \, \mathfrak{h}^d \sup_{v_h\in V_h^{k}(\tO)} \; \sup_{w_h\in V_h^{k}(\tO)} \;  \dfrac{\| v_h \|_h \| w_h \|_h}{\| v_h \|_{0, \Omega_h} \| w_h \|_{0, \Omega_h}} 
	\\
	&\lesssim \, \mathfrak{h}^{d - 2} 
\end{align*}
	since $\| w_h \|_h \lesssim \mathfrak{h}^{-1} \, \| w_h \|_{0, \Omega_h} $.
	Similarly, the inf-sup condition of Lemma \ref{lemma:inf_sup} implies
\begin{align*}
   \frac{1}{\| \mathbf{A}^{- 1} \|_2} 
   %= \left( \sup_{\mathbf{v} \in \mathbb{R}^N} \dfrac{| \mathbf{v} |_2}{|\mathbf{Av} |_2 } \right)^{- 1}
	= \inf_{\mathbf{v} \in \mathbb{R}^N} \sup_{\mathbf{w} \in \mathbb{R}^N} \dfrac{\mathbf{Av} \cdot \mathbf{w}}{| \mathbf{v} |_2 | \mathbf{w} |_2} 
%	= \inf_{\mathbf{v} \in \mathbb{R}^N} \sup_{\mathbf{w} \in \mathbb{R}^N} \dfrac{a_h^k (v_h, w_h)}{| \mathbf{v} |_2 | \mathbf{w} |_2}  
	\gtrsim \, \mathfrak{h}^d \sup_{v_h\in V_h^{k}(\tO)} \; \sup_{w_h\in V_h^{k}(\tO)} \;  \dfrac{a_h^k (v_h, w_h)}{\| v_h \|_{0, \Omega_h} \| w_h \|_{0, \Omega_h}}  
	\gtrsim \, \mathfrak{h}^d 
\end{align*}
since $\| w_h \|_{0, \Omega_h} \lesssim \| w_h \|_h$ (a form of the Poincar\'{e} inequality). These two estimates give (\ref{TheorCondNum}).

\section{A penalty-free Shifted Boundary Method for compressible linear elasticity \label{sec:sbm_elasticity}}

In the numerical tests that follow, we also consider the equations of (compressible) linear elasticity. Their strong form is given as
\begin{subequations}
	\label{eq:SteadyLinEla}
\begin{align}
- \nabla \cdot \left( \bs{\sigma(\bs{u})} \right) &=\; \bs{b}  \qquad \text{\ \ in \ } \Om \; ,
 \\
\bs{u} &=\;  \bs{u}_D \qquad \! \text{on \ }  \G_D \; ,
 \\
\bs{\sigma} \bs{n} &=\;  \bs{t}_N \qquad \text{on \ }  \G_N \; ,
\end{align}
\end{subequations}
where $\bs{u}$ is the displacement field, $\bs{u}_D$ its value on the Dirichlet boundary $\G_D$, $\bs{t}_N$ the normal traction along the Neumann boundary $\G_N$, and $\bs{b}$ a body force. 
We assume that $\partial \Om = \overline{\G_D \cup \G_N}$ and $\G_D \cap \G_N = \emptyset$.
The stress $\bs{\sigma}$ is a linear function of $\bs{u}$, according to the constitutive model 
\[ \bs{\sigma}(\bs{u}) = \ssb{C} \, \bs{\varepsilon}(\bs{u}) \; , \]
where $\ssb{C}$ is the fourth-order elasticity tensor. For isotropic materials, the previous definition of the stress reduces to
$$\bs{\sigma}(\bs{u}) = 2 \mu \, \bs{\varepsilon}(\bs{u}) + \lambda (\nabla \cdot \bs{u}) \bs{I}\; . $$
We then consider the following variational formulation of~\eqref{eq:SteadyLinEla}, also inspired by~\cite{atallah2021shifted}: 
\begin{quote}
Find $\bs{u}_h \in \bs{V}_h(\tO)$, where 
$$\bs{V}_h(\tO)  : = \; \left\{ \bs{v}_h \in (C^0(\tO))^{n_{d}}  \ | \ {\bs{v}_h}_{|T} \in (\mathcal{P}^1(T))^{n_{d}}  \, , \, \forall T \in \ti{\mathcal{T}}_h \right\} \; , $$
such that,  for any $\bs{w}_h \in \bs{V}_h(\tO)$, it holds
\end{quote}
\vspace{-0.3cm}
%%%%
%%%\begin{align}
%%%\label{eq:DiscreteShiftedNitscheVariationalFormSteadyLinEla}
%%%( \ssb{C} \bs{\varepsilon}(\bs{u}_h) \, , \, \bs{\varepsilon}(\bs{w}_h) )_{\tO} 
%%%- \avg{ \, ( \ssb{C} \bs{\varepsilon}(\bs{u}_h)) \ti{\bs{n}}  \, , \,  \bs{w}_h }_{\tG}
%%%+ \avg{ \tS^k \bs{u}_h  - \bs{u}_D   \, , \,  ( \ssb{C} \bs{\varepsilon}(\bs{w}_h))\ti{\bs{n}} }_{\tGD} 
%%%&
%%%\nonumber \\
%%%+ \avg{ \bs{w}_h  , \, (\bs{n} \cdot \ti{\bs{n}}) \, \ssb{C} \, \tS^{k-1} (\bs{\varepsilon}(\bs{u}_h)) \bs{n} - \bar{\bs{t}}_N }_{\tGN} 
%%%& = \; 
%%%(\bs{b} \, , \, \bs{w}_h )_{\tO} 
%%%\, ,
%%%\end{align}
%%%\begin{quote}
%%%or, for an isotropic material,
%%%\end{quote}
%%%\vspace{-0.3cm}
%
\begin{align}
\label{eq:DiscreteShiftedNitscheVariationalFormSteadyLinElaIso}
( 2 \mu \, \bs{\varepsilon}(\bs{u}_h) \, , \, \bs{\varepsilon}(\bs{w}_h) )_{\tO} 
+ ( \lambda \, \nabla \cdot \bs{u}_h \, , \, \nabla \cdot \bs{w}_h )_{\tO} 
- \avg{ \, 2 \mu \, \bs{\varepsilon}(\bs{u}_h) \ti{\bs{n}}  + \lambda (\nabla \cdot \bs{u}_h) \ti{\bs{n}}  \, , \,  \bs{w}_h }_{\tG}
&
\nonumber \\
+ \avg{ \tS^k \bs{u}_h  - \bs{u}_D   \, , \,   2 \mu \, \bs{\varepsilon}(\bs{w}_h) \ti{\bs{n}}  + \lambda (\nabla \cdot \bs{w}_h) \ti{\bs{n}} }_{\tGD} 
&
\nonumber \\
+ \avg{ \bs{w}_h  , \, (\bs{n} \cdot \ti{\bs{n}}) \, ( 2 \mu \, \tS^{k-1} ( \, \bs{\varepsilon}(\bs{u}_h)) \bs{n} + \lambda \, \tS^{k-1} (\nabla \cdot \bs{u}_h) \bs{n}) - \bar{\bs{t}}_N }_{\tGN} 
& \; = \; 
(\bs{b} \, , \, \bs{w}_h )_{\tO} 
\, .
\end{align}

This variational formulation can be analyzed with very similar strategies to the Poisson equation, and similar results on convergence and stability can be derived in the case of pure Dirichlet boundary conditions.
In the case when Neumann boundary conditions are applied, numerical tests indicate that this formulation is sub-optimal by one order of accuracy, because the stress can only be extrapolated with Taylor expansions of order $k-1$, since it contains the gradients of $\bs{u}$.
We recall that a high-order body-fitted finite element method requires the fitting of the mesh to the geometry at the same order of accuracy of the polynomial interpolation space utilized.
This represents a strong practical limitation of high-order discretizations, and the tradeoff offered by the SBM with Neumann boundary conditions is that no body-fitted meshing is required, at the price of the loss of one order of accuracy with respect to optimal rates. 

Obviously, if only Dirichlet boundary conditions are applied, the SBM is optimal, although this situation is less common in structural mechanics applications, and more typical in fluid mechanics applications.

\begin{figure}[t!]
  \centering
  \begin{subfigure}[tbh!]{.3\textwidth}
    \centering
    \begin{tikzpicture}
      \filldraw[color=black, fill=black!10, thick](1,1) rectangle (3,3);
      \draw[color=black, thin](0.0,0.0) rectangle (4,4);
      %\filldraw [black] (0,0) square (1pt);
      \draw (0,0) -- (2,0);
      \node[text width=0.1cm] at (2,1-0.2){$l$};
    \end{tikzpicture}
    \caption{The domain $\Om$ (grey).}
    \label{fig:square}
  \end{subfigure}
  \qquad
  \begin{subfigure}[tbh!]{0.30\textwidth}\centering
    \includegraphics[width=\linewidth]{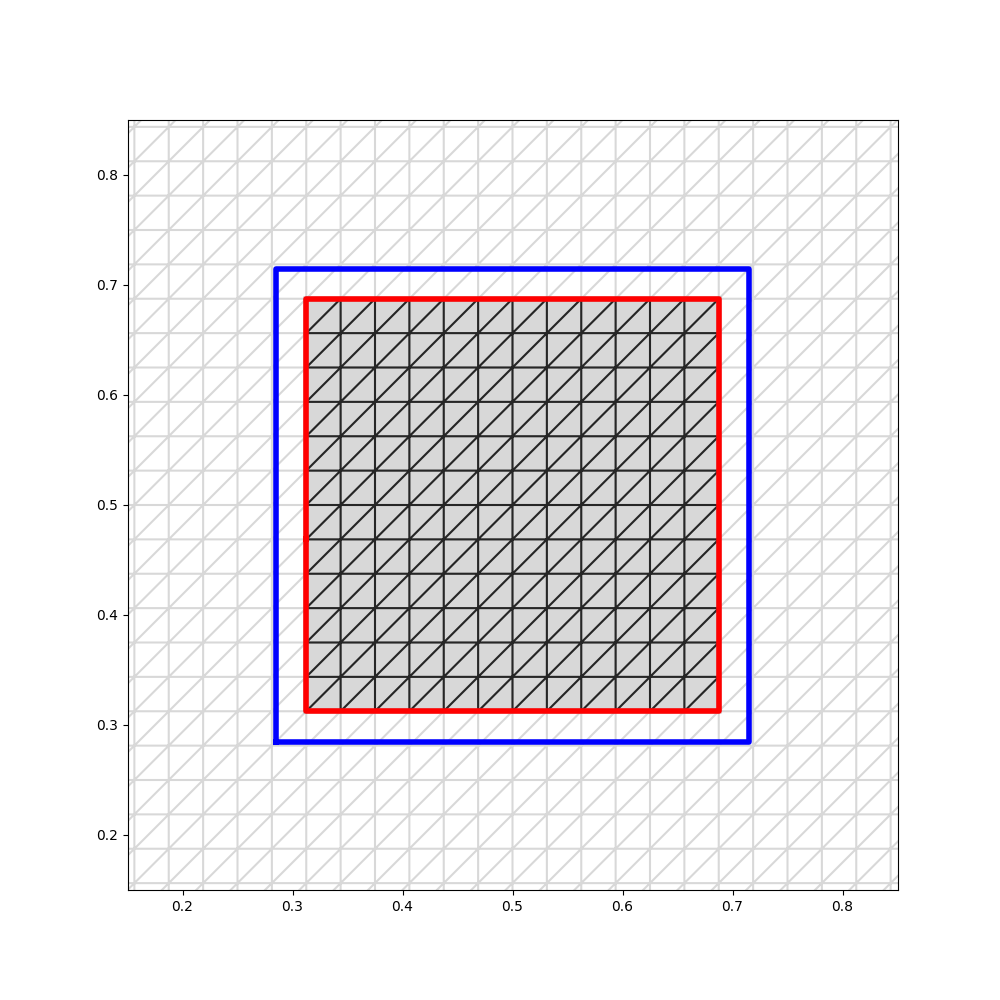}
    \caption{Domain and grid (no rotation).}
    \label{fig:mesh}
  \end{subfigure}
  \begin{subfigure}[tbh!]{0.30\textwidth}\centering
    \includegraphics[width=\linewidth]{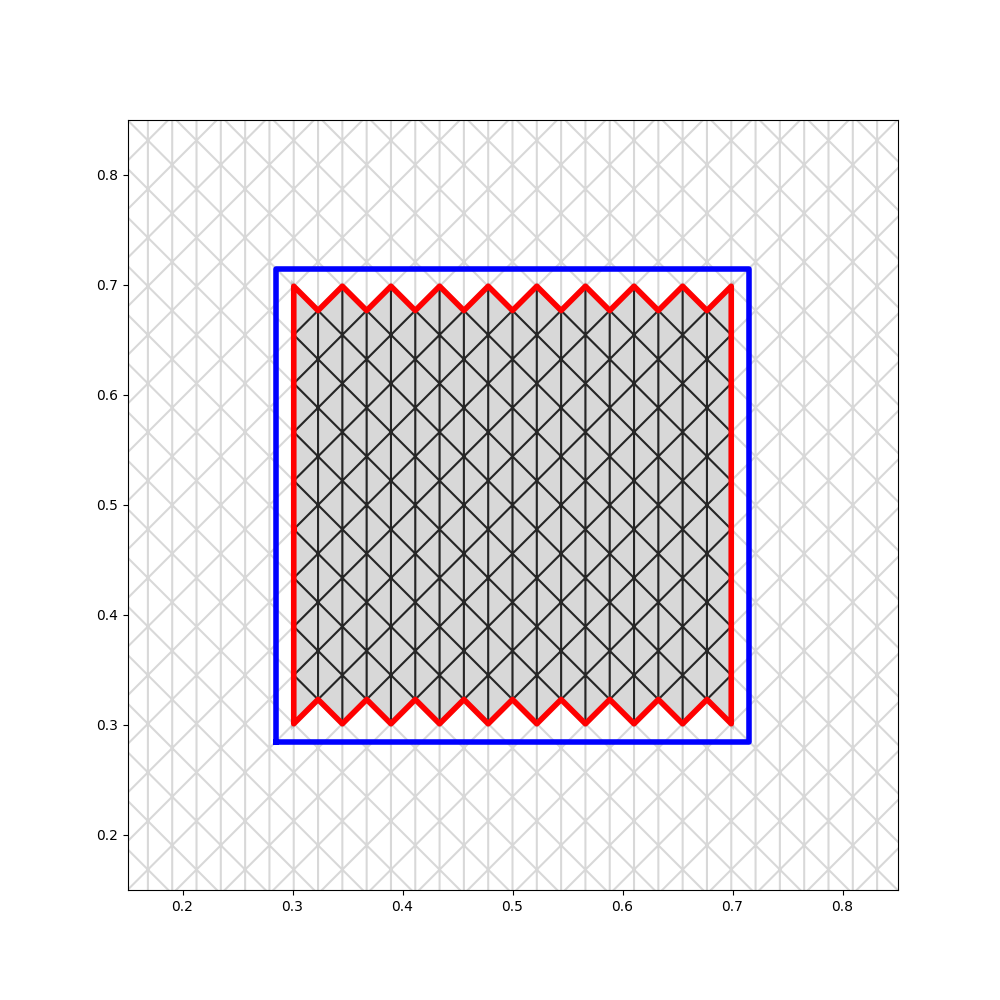}
    \caption{Domain and grid ($45^\circ$-rotation).}
    \label{fig:rotmesh}
  \end{subfigure}
\caption{Problem schematic for the square domain (grey), true boundary (blue), surrogate boundary (red) for the base grid (no rotation) and the the same grid rotated by 45 degrees.}
\end{figure}
\begin{figure}[!htb]
	\centering
	\subfloat[$u_h$ for $0^o$-rotation]{\includegraphics[trim=2.5cm 2.5cm 2.5cm 2.5cm, clip, width = 1.5in]{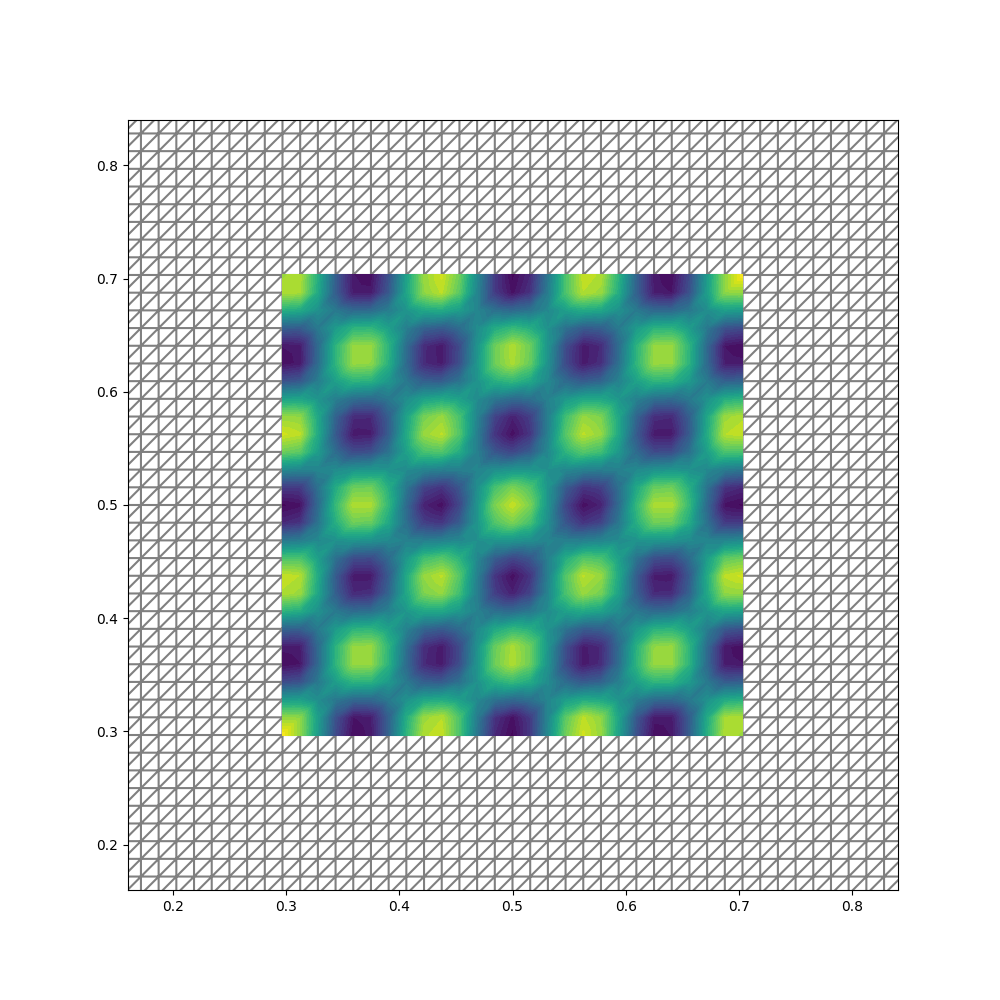}}
	 \; \;
	\subfloat[$u_h$ for $13.5^o$-rotation]{\includegraphics[trim=2.5cm 2.5cm 2.5cm 2.5cm, clip, width = 1.5in]{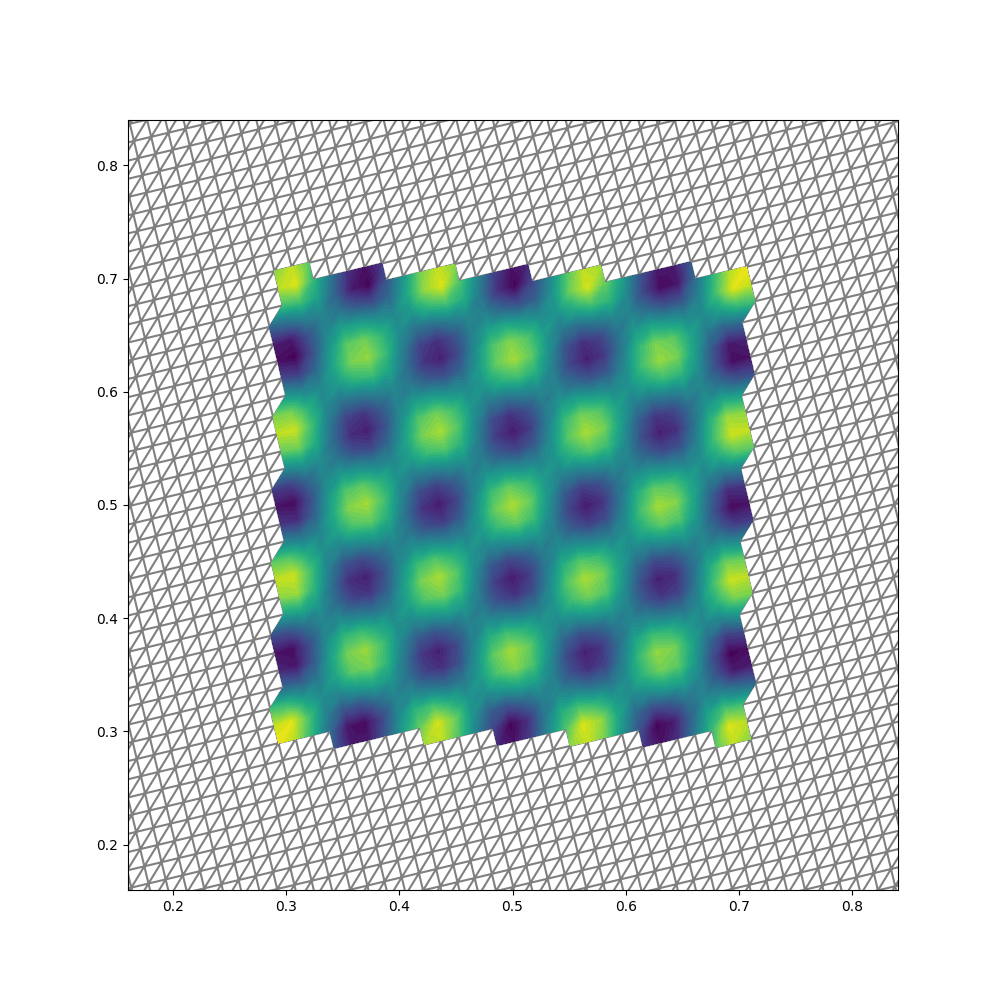}}
	 \; \;
	\subfloat[$u_h$ for $31.5^o$-rotation]{\includegraphics[trim=2.5cm 2.5cm 2.5cm 2.5cm, clip, width = 1.5in]{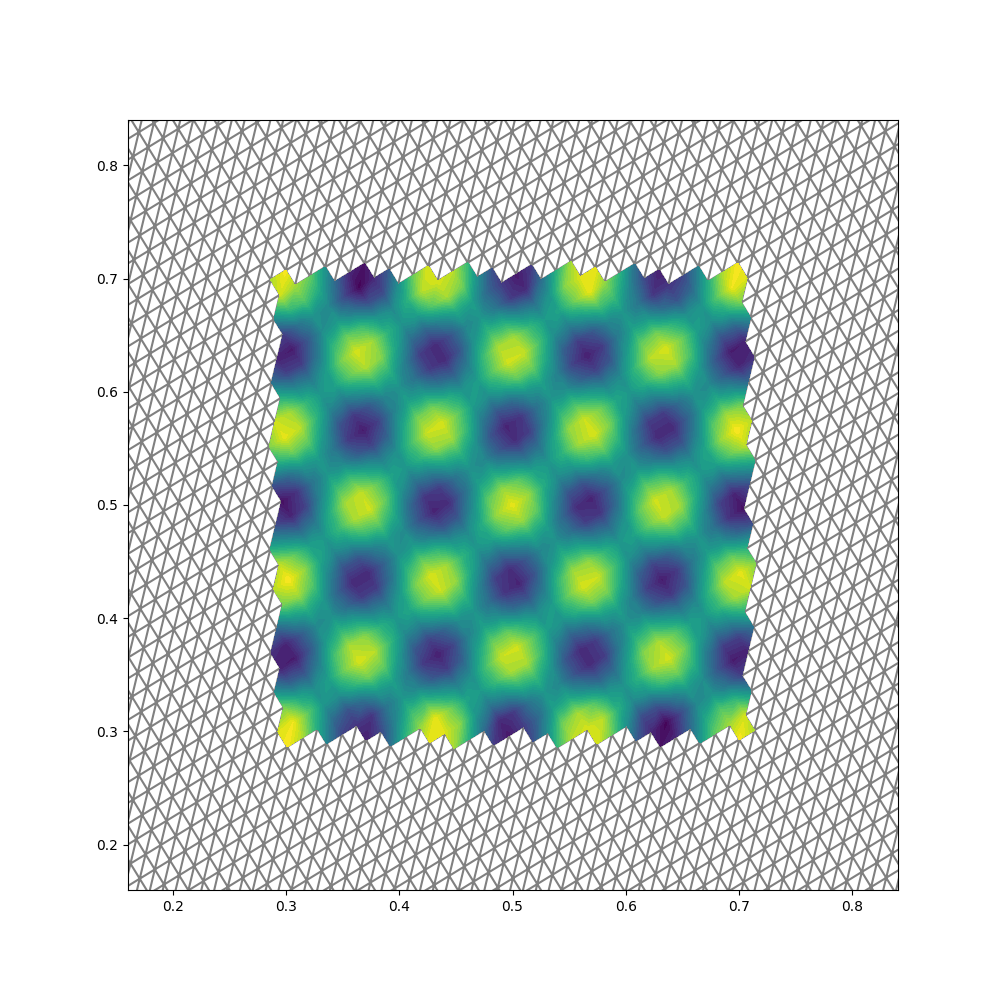}}
	 \; \;
	\subfloat[$u_h$ for $45^o$-rotation]{\includegraphics[trim=2.5cm 2.5cm 2.5cm 2.5cm, clip, width = 1.5in]{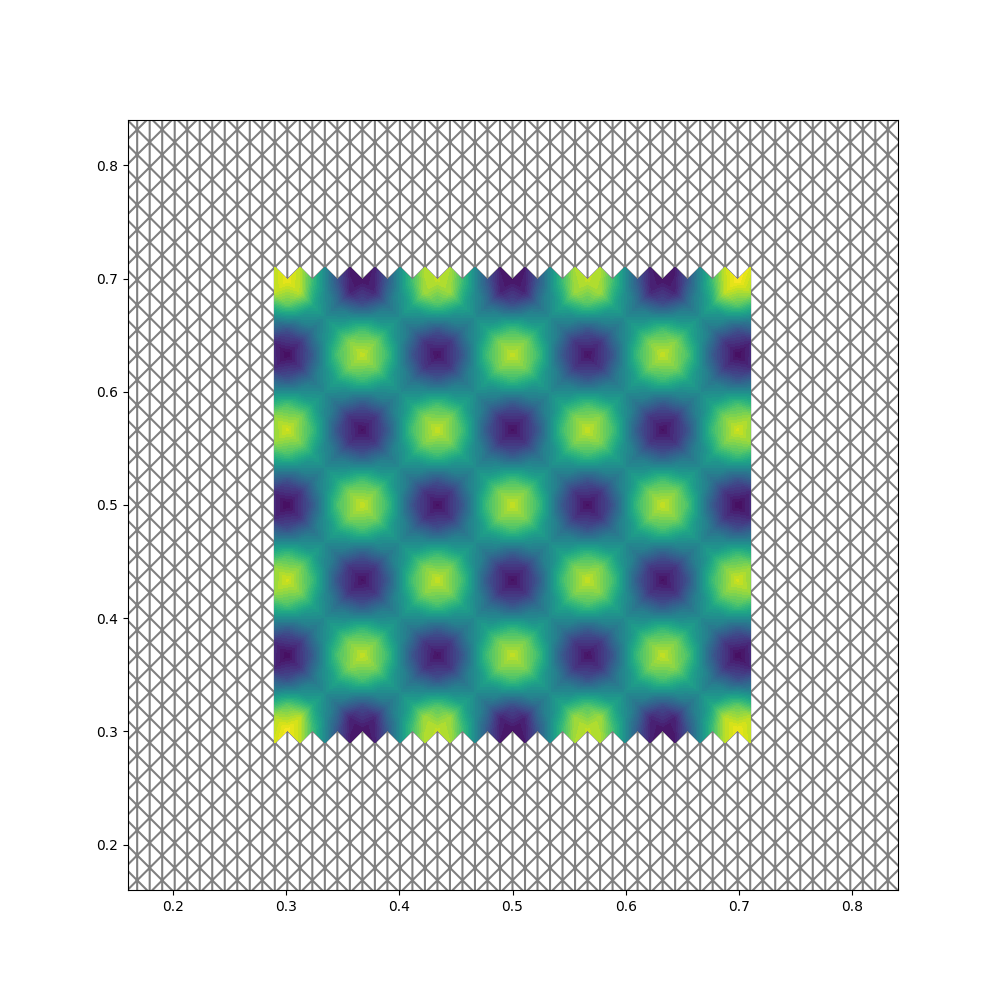}}
        \caption{Visualizations of the numerical solution $u_h$ for various rotations of the computational grid.}
        \label{fig:visual}

\end{figure}
\section{Numerical results \label{sec:numerical_results}}

\subsection{Poisson problem with Dirichlet boundary conditions}
A series of numerical tests were performed for the Poisson equation defined on a square domain $\Om$ of length $l = 0.43$ centered at $[0.5, 0.5]$. The problem schematic is depicted in \ref{fig:square}, with the domain in grey, the true boundary in blue, and the surrogate boundary in red. 
The analytical solution is
$$
  u(x,y) = \sin(15 \pi x) \sin(15 \pi y) \;
$$
and is obtained with the method of manufactured solutions, from the forcing term
$$
  f(x,y) = 450 \, \pi^2\sin(15 \pi x) \, \sin(15 \pi y) \; .
$$
Dirichlet boundary conditions that are compatible with the analytic solution were enforced on all four sides of the square.

In order to examine a high number of surrogate boundary arrangements, the square domain was immersed into a regular triangular background mesh, which was then incrementally rotated from 0 degrees (see Fig.~\ref{fig:mesh}) to 45 degrees (see Fig.~\ref{fig:rotmesh}).
Seven levels of grid refinement were used in the simulations, for finite element interpolation spaces of polynomial degree ranging from first to fifth order.
A sampling of computed solutions from various grid rotations are included in Figure \ref{fig:visual}.

The results displayed in Figures~\ref{fig:H0results} and ~\ref{fig:H1results} show the convergence rates of the $L^2$-norm and the $H^1$-semi-norm (respectively) for polynomial orders one through five. The mean error rates of all rotations are presented, along with the individual rates for each rotation.
The $L^2$-norm and $H^1$-seminorm of the errors converge with rates $k + 1$ and $k$ (respectively), which are optimal for Lagrange elements of polynomial order $k$.

\begin{figure}[t!]
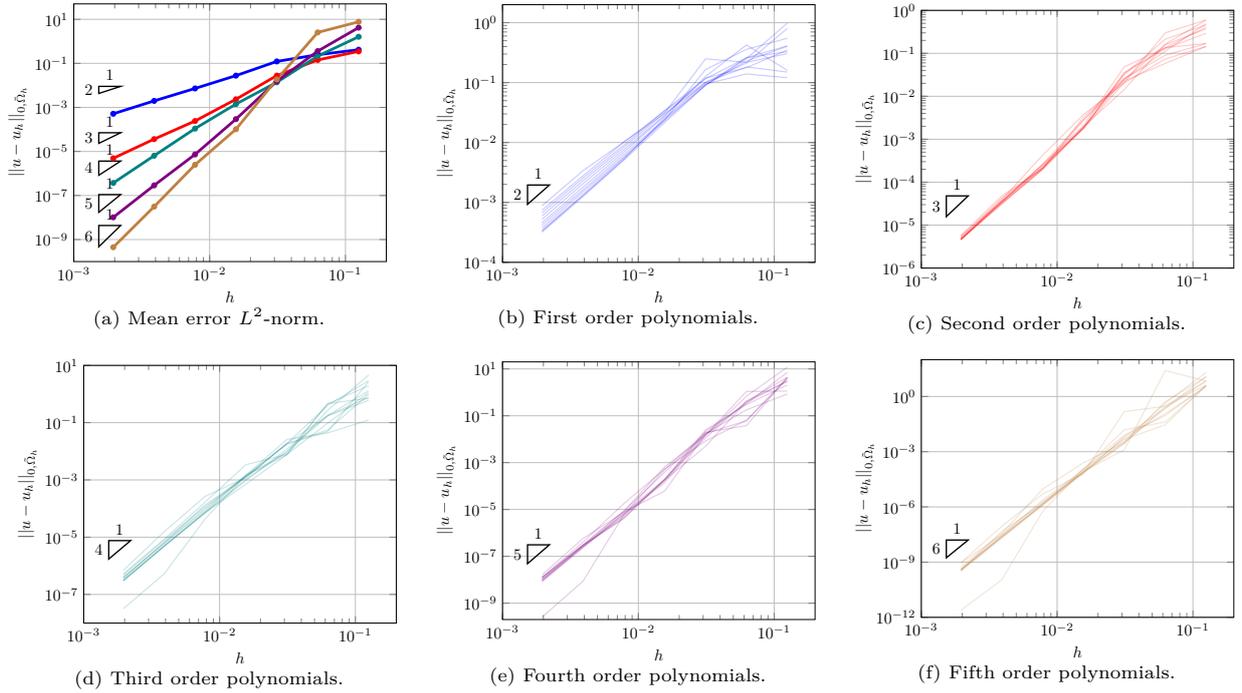

	\begin{subfigure}[tbh!]{0.33\textwidth}
		% [inline block 0: 12 envs, 75782 chars -> data_tex | \begin{tikzpicture}[scale= 0.6] 			\begin{loglogaxis}[...]

		\vspace{-.2cm}
                \caption{Fifth order polynomials.}
	\end{subfigure}
        \caption{Poisson problem with Dirichlet boundary conditions. Convergence rates of the $H^1$-seminorm of the error, $ | u - u_h |_{1,\tO} $, using first, second, third, fourth, and fifth order polynomial elements. The mean error over all grid rotations is presented in (a), errors for individual grid rotations are presented in (b)-(f). 
        }
        \label{fig:H1results}
\end{figure}

Additionally, we also computed the condition numbers $ \kappa(\mathbf{A})$ of the matrix $\mathbf{A}$ associated with~\eqref{eq:UnsymmetricShiftedNitscheBilinearForm} and of entries $\mathrm{A}_{IJ}= a^k_h(\mathrm{N}_J \, , \, \mathrm{N}_I)$.
Here $\mathrm{N}_J$ represents the global finite element shape function associated with the degree-of-freedom (node) $J$.
The mean values of $ \kappa(\mathbf{A})$, shown in Figure~\ref{fig:condnum}, scale with $h^{-2}$ as predicted in Section~\ref{sec:cond_numbers}.
\begin{figure}[tbh!]
  \centering
  \begin{tikzpicture}[scale= 0.85]
    \begin{loglogaxis}[
	grid=major,x post scale=1.0,y post scale =1.0,
	xlabel={$h$},ylabel={$ \kappa(\mathbf{A})$},xmax=2E-01, xmin=1E-03, ymax=1E10, ymin=1E0,
	legend style={font=\footnotesize,legend cell align=left},
	legend pos= south east]
      \addplot[ mark=*,blue, opacity=1,line width=1.75pt,mark size=1pt] coordinates {
        (1.25E-01 , 1.07608932e+01)
        (6.25E-02 , 2.32911133e+01 )
        (3.125E-02 , 8.06784657e+01 )
        (1.5625E-02 , 3.17977344e+02 )
        (7.8125E-03 , 1.27302603e+03 )
        (3.90625E-03 , 5.13341752e+03 )
        (1.953125E-03 , 2.02728927e+04 )
      };
      \addplot[ mark=*,red, opacity=1,line width=1.75pt,mark size=1pt] coordinates {
        (1.25E-01 , 9.38123924e+01 )
        (6.25E-02 , 3.84341419e+02 )
        (3.125E-02 , 1.56169059e+03 )
        (1.5625E-02 , 6.66668376e+03 )
        (7.8125E-03 , 2.79048801e+04 )
        (3.90625E-03 , 1.12862446e+05 )
        (1.953125E-03 , 4.32465079e+05 )
      };
      \addplot[ mark=*,teal, opacity=1,line width=1.75pt,mark size=1pt] coordinates {
        (1.25E-01 , 1.78219324e+03 )
        (6.25E-02 , 6.89780375e+03 )
        (3.125E-02 , 2.89047611e+04 )
        (1.5625E-02 , 1.26047668e+05 )
        (7.8125E-03 , 5.49312946e+05 )
        (3.90625E-03 , 2.20186439e+06 )
        (1.953125E-03 , 8.42422223e+06 )
      };
      \addplot[ mark=*,violet, opacity=1,line width=1.75pt,mark size=1pt] coordinates {
        (1.25E-01 , 3.21879050e+04 )
        (6.25E-02 , 1.05403954e+05 )
        (3.125E-02 ,4.66956318e+05 )
        (1.5625E-02 , 2.10450174e+06 )
        (7.8125E-03 , 9.27974663e+06 )
        (3.90625E-03 , 3.69486746e+07 )
        (1.953125E-03 , 1.43682920e+08 )
      };
      \addplot[ mark=*,brown, opacity=1,line width=1.75pt,mark size=1pt] coordinates {
        (1.25E-01 , 1.85657777e+06 )
        (6.25E-02 , 2.39474717e+07 )
        (3.125E-02 , 1.00577770e+08 )
        (1.5625E-02 , 3.44657107e+07 )
        (7.8125E-03 , 1.90223797e+08 )
        (3.90625E-03 , 6.15761975e+08 )
        (1.953125E-03 , 2.39310760e+09 )
      };
      \ReverseLogLogSlopeTriangleFlip{0.15}{0.07}{0.64}{-2}{black}{0.3};
    \end{loglogaxis}
  \end{tikzpicture}
  \vspace{-.2cm}
  \caption{Poisson problem with Dirichlet boundary conditions. Mean condition numbers for interpolation spaces ranging from first order polynomials (blue) to fifth order polynomials (brown).}
  \label{fig:condnum}
\end{figure}
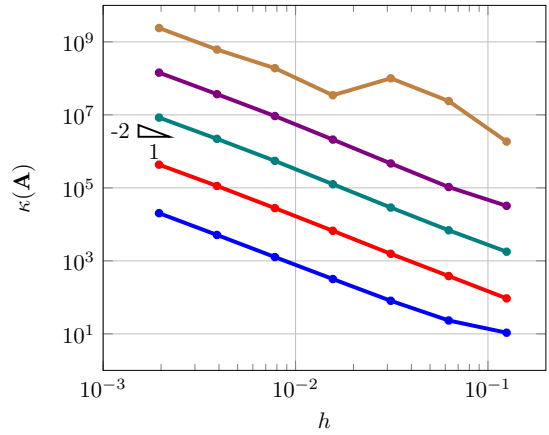

\subsection{Linear Elasticity with Dirichlet and Neumann boundary conditions}

Using the same geometric setup and grids of the previous test, we also considered the simulation of isotropic compressible linear elasticity equations, with mixed boundary conditions.
The Young's modulus is $E = 10 GPa$, and the Poisson's ratio is $\nu = 0.3$.
The Neumann boundary $\G_N$ is located on the top of the square, while the remaining three sides (left, right, and bottom) constitute the Dirichlet boundary $\G_D$.

As before, seven levels of grid refinement were used in the simulations, for finite element interpolation spaces of polynomial degree ranging from first to fifth order.
Similarly, the computational grids were incrementally rotated from 0 degrees (see Fig.~\ref{fig:mesh}) to 45 degrees (see Fig.~\ref{fig:rotmesh}).

The analytical solution was chosen to be
{\begin{equation}
\begin{cases}
u_{x}(x,y) = 10 \, \pi \sin(10\pi x) \sin(10\pi y)\; , \\
u_{y}(x,y) =  10\, \pi \cos(10\pi x) \cos(10\pi y) \; .
\end{cases}
\end{equation}
and a corresponding forcing function $\bs{b}(x,y)$ was applied with the method of manufactured solutions.

\begin{figure}[tbh!]
	\begin{subfigure}[tbh!]{0.33\textwidth}
		% [inline block 1: 12 envs, 75089 chars -> data_tex | \begin{tikzpicture}[scale= 0.6] 			\begin{loglogaxis}[...]

		\vspace{-.2cm}
                \caption{Fifth order polynomials.}
	\end{subfigure}
        \caption{Linear elasticity problem with Dirichlet and Neumann boundary conditions: Convergence rates for the $H^1$-seminorm of the error, $ \, | \bs{u} - \bs{u}_h |_{1,\tO} $, using first, second, third, fourth, and fifth order polynomial elements. The mean error over all grid rotations is presented in (a), errors for individual grid rotations are presented in (b)-(f).}
        \label{fig:H1resultsLE}
\end{figure}

We can see from Figure~\eqref{fig:H0resultsLE} that the $L^2$-norm of the solution error has a suboptimal rate of convergence by one order, that is polynomial approximations of order $p$ converge with order $p$ and not $p+1$. 
On the other hand, as seen in Figure~\ref{fig:H1resultsLE}, optimal orders of convergence ($p$) are recovered in the $H^1$-seminorm.
This results were expected, and show that the proposed penalty-free version of the SBM only loses one order of convergence in spite of the fact that the surrogate boundary $\tG$ is represented by straight segments.

\section{Summary \label{sec:summary} }
We have proposed a penalty-free version of the SBM, which avoids the tedious selection/estimation of a penalty parameter.
The analysis of stability and convergence demonstrates that the method is stable and convergent for any order of accuracy, and these theoretical results are confirmed by a series of numerical experiments. In the course of the numerical analysis, we also discovered an important interpretation of the SBM as a Galerkin consistent method, a new and somewhat unexpected result on the SBM.

\section*{Acknowledgments}
J. Haydel Collins and Guglielmo Scovazzi have been  supported by the National Science Foundation, Division of Mathematical Sciences (DMS), under Grant 2207164. 

\section*{References}
\bibliographystyle{plain}
\bibliography{./SBM_bibliography} 
%\clearpage
%\appendix
%\input{./LaTeX_files_archive/SBM_appendix}

\end{document}